
\documentclass[11pt,a4paper,twoside]{article}

\usepackage[T1]{fontenc}
\usepackage[cp1250]{inputenc} 
\usepackage{times}

\usepackage[leqno]{amsmath} 
\usepackage{amsthm,amsfonts,amssymb} 
\usepackage{bm} 

\input xy  \xyoption{all}

\usepackage{geometry}
\usepackage{color}

\usepackage{enumerate} 
\usepackage{hyperref} 


\newcommand{\R}{\mathbb{R}}
\newcommand{\Z}{\mathbb{Z}}

\newcommand{\wt}[1]{\widetilde{#1}} 

\newcommand{\und}{\underline}

\newcommand{\lra}{\longrightarrow}
\newcommand{\ra}{\rightarrow}

\newcommand{\eps}{\epsilon}
\newcommand{\veps}{\varepsilon}
\newcommand{\del}{\delta}

\newcommand{\pa}{\partial}
\newcommand{\Sec}{\operatorname{Sec}}
\newcommand{\id}{\operatorname{id}}

\newcommand{\PP}{\operatorname{P}} 

\newcommand{\dd}{\operatorname{d}}

\newcommand{\thh}[1]{{#1}^{\mathrm{th}}}
\newcommand{\st}[1]{{#1}^{\mathrm{st}}}

\newcommand{\T}{\mathrm{T}} 
\newcommand{\TT}[1]{\mathrm{T}^{#1}} 
\newcommand{\Thol}[1]{\mathrm{\wt T}^{#1,#1}} 
\newcommand{\Tt}[1]{\mathrm{T}^{(#1)}} 
\newcommand{\tgT}{\T} 
\newcommand{\jet}[1]{\bm{\mathrm{t}}^{#1}} 
\newcommand{\E}[1]{E^{#1}} 
\newcommand{\F}{\mathrm{F}}  
\newcommand{\M}{\mathrm{M}}  
 \newcommand{\Ttsemihol}[1]{\wt \T^{(#1)} }
 \newcommand{\Tsemihol}[1]{\wt \T^{#1} }

\newcommand{\Adm}{\mathcal{ADM}} 
\newcommand{\Pp}{\mathcal{P}} 
\newcommand{\Weil}{\mathbb{D}\, } 
\newcommand{\momenta}{\upsilon} 
\newcommand{\tauM}[2]{\tau^{#1}_{#2}} 
\newcommand{\tauE}[2]{\tau^{#1}_{#2}} 

\newcommand{\OmegaALL}{\Omega^{\bullet}} 
\newdir{|>}{!/4,5pt/@{|}*:(1,-.2)@^{>}*:(1,+.2)@_{>}}
\def\relto{\rightarrow\!\!\vartriangleright} 

\def\<#1>{\left\langle #1\right\rangle}
\def\(#1){\left( #1\right)}

\numberwithin{equation}{section} 

\theoremstyle{plain}
\newtheorem{thm}{Theorem}[section]
\newtheorem{prop}[thm]{Proposition}
\newtheorem{lem}[thm]{Lemma}

\theoremstyle{definition}
\newtheorem{df}[thm]{Definition}

\newtheorem{problem}[thm]{Problem}

\theoremstyle{remark}
\newtheorem{rem}[thm]{Remark}
\newtheorem{cor}[thm]{Corollary}

\pagestyle{myheadings}
\markright{Bundle-theoretic methods for  higher-order variational calculus}
\geometry{a4paper,pdftex,tmargin=3cm,bmargin=3cm,footskip=1cm,lmargin=30mm,rmargin=25mm,twoside}
\setlength{\parindent}{15pt} 



\begin{document}

\title{Bundle-theoretic methods for higher-order variational calculus\footnote{This research was supported by Polish National Science Center grant
under the contract number DEC-2012/06/A/ST1/00256.}}
\author{Micha\l\ J\'{o}\'{z}wikowski\footnote{\emph{Institute of Mathematics. Polish Academy of Sciences} (email: \texttt{mjozwikowski@gmail.com})},
Miko\l aj Rotkiewicz\footnote{\emph{Institute of Mathematics. Polish Academy of Sciences} and \emph{Faculty of Mathematics, Informatics and Mechanics. University of Warsaw} (email: \texttt{mrotkiew@mimuw.edu.pl})}}

\maketitle
\begin{abstract}
We present a geometric interpretation of the integration-by-parts formula on an arbitrary vector bundle. As an application we give a new geometric formulation of higher-order variational calculus.
\end{abstract}

\paragraph*{Keywords:} higher tangent bundles, variational calculus, higher-order Euler-Lagrange equations, graded bundles, integration by parts, geometric mechanics

\paragraph*{MSC:} 58A20, 58E30, 70H50



\section{Introduction}

\paragraph{Our results.}
The main motivation of this paper is to clarify
the geometry of higher-order variational calculus.
In order to study this topic we had to introduce new geometric tools and prove some results which we quickly sketch below.

First, we observe that, given a vector bundle $\sigma:E\ra M$, it is possible to characterize two canonical morphisms $\Upsilon_{k,\sigma}:\Tsemihol{k,k}\sigma\ra\sigma$ and $\momenta_{k,\sigma}:\Tsemihol{k,k}\sigma\ra\TT k\sigma$, where \emph{the bundle of semi-holonomic vectors} $\Tsemihol{k,k}\sigma$ consists of all elements of $\TT{k}\TT{k}E$ which project to the elements of $\TT{2k}M\subset \TT{k}\TT{k}M$ (i.e., to \emph{holonomic} vectors). Morphisms $\Upsilon_{k,\sigma}$ and $\momenta_{k,\sigma}$ provide an elegant geometric description of the $\thh{k}$
order integration-by-parts formula on the bundle $\sigma$. A more precise formulation is provided by Theorem \ref{thm:k_hol} below.

Then, we use this result to obtain a comprehensive and, to certain extent, simpler geometric interpretation of the standard procedure of deriving both the Euler-Lagrange equations (forces) and the boundary terms (generalized momenta) for a $\thh{k}$ order variational problem. Our description is ``natural'' in the sense that it mimics the standard way of deriving the Euler-Lagrange equations.
We start from a homotopy $\gamma(t,s)$ in $M$ and then transform the variation of the action $\jet 1_{s=0}\int_{t_0}^{t_1} L(\jet k_t \gamma(t,s))\dd t$ to extract the integral and the boundary terms. Here $\jet k_t \gamma$ stands for the $\thh{k}$ jet of a curve $\gamma=\gamma(t)$ at a point $t\in\R$. In particular, $\jet 1_{t=t_0} \gamma(t)$ is the tangent vector to the curve $\gamma$ at $t_0$.  On the computational level this procedure consists basically of two steps:
\begin{itemize}
\item reversing the order of differentiation to get $\jet k_t\jet 1_s \gamma(t,s)$ from $\jet 1_s\jet k_t\gamma(t,s)$,
\item performing the $\thh{k}$ order integration by parts to extract $\jet 1_{s=0}\gamma(t,s)$ from $\jet k_t\jet 1_{s=0}\gamma(t,s)$.
\end{itemize}
The geometric interpretation of the first step is the following. Vector $\jet 1_s\jet k_t\gamma(t,s)$ is the image of $\jet k_t\jet 1_s\gamma(t,s)$ under the canonical flip $\kappa_{k}:\TT k\T M\ra \T\TT k M$, hence reversing this operation requires applying the dual map $\veps_{k}:\TT\ast\TT kM\ra \TT k\TT\ast M$ to the differential of the Lagrangian \cite{Cantrin_Crampin_Inn_can_isom_1989}. Concerning the second, we can apply  the maps $\Upsilon_{k,\tau^\ast}$ and $\momenta_{k-1,\tau^\ast}$ introduced by us (here $\tau^\ast:\TT\ast M\ra M$ stands for the cotangent fibration). In this way we obtain the geometric formula $\Upsilon_{k,\tau^\ast}\left(\jet k\veps_{k}\left(\dd L(\jet k\gamma(t))\right)\right)$ describing the force (integral term) along the trajectory $\gamma(t)=\gamma(t, 0)$. Thus the Euler-Lagrange equations along $\gamma(t)$ read as
$$\Upsilon_{k,\tau^\ast}\left(\jet k_t\veps_{k}\left(\dd L(\jet k\gamma(t))\right)\right)=0.$$
The geometric formula for the momentum (boundary term)
is obtained in a similar way using the map $\momenta_{k-1,\tau^\ast}$. The precise description is provided by Theorems \ref{thm:var_calc} and \ref{thm:EL}.

\paragraph{State of research, applications.}
In the theory of jet bundles there exists a well established notion of \emph{semi-holonomic jets} \cite{Saunders_Geom_of_jet_bndls}. These objects share some similarities with our \emph{semi-holonomic vectors}, however, in principle, are different, so that the similarity of names shall not be confusing.
Morphisms $\Upsilon_{k,\sigma}$ and $\momenta_{k,\sigma}$ were, to our best knowledge, so far not present in the literature.
On the contrary, the problem of geometric formulation of higher-order variational calculus has gained a lot of interest in the mathematical community and has many solutions (e.g., \cite{Crampin_EL_higher_order_1990,Leon_Lacomb_lagr_sbmfd_ho_mech_sys_1989, Tulcz_ehres_jet_theory_2006,Viatgliano_lagr_ham_h_field_theor_2010}), briefly reviewed in Section \ref{sec:examples}.

Since the geometry of higher-order variational calculus is a well-established topic, one should ask about other applications of morphisms $\Upsilon_{k,\sigma}$ and $\momenta_{k,\sigma}$. We believe that our result makes it easier to generalize higher-order variational calculus and mechanics to the framework of algebroids. Apart from some very special cases (see \cite{Gay_Holm_Inn_inv_ho_var_probl_2012}), we are not aware of any such generalization in the spirit of the classical papers quoted above. Our preprint \cite{MJ_MR_higher_algebroids_2013} contains some results in this direction, which make use of the tools introduced here. We present them briefly in Section \ref{sec:examples}.

\paragraph{Outline of the paper.}
In the preliminary Section \ref{sec:perliminaries} we revise basic information on higher tangent bundles, vector bundles and their lifts, as well as canonical pairings between such objects. We also recall the notion of the canonical flip $\kappa_k:\TT k\T M\lra\T\TT kM$ and its dual $\veps_k:\TT\ast\TT kM\ra\TT k\TT\ast M$.

Section \ref{sec:lemma}, with its central Theorem \ref{thm:k_hol}, contains our main results. We begin by introducing the notion of the bundle of semi-holonomic vectors $\Tsemihol{n_1,\hdots,n_k}\sigma$ and the canonical projection $\PP_k:\Ttsemihol k \sigma := \Tsemihol{1,\ldots, 1}\sigma \ra\TT k\sigma$. In Theorem \ref{thm:k_hol} we provide a geometric construction of the canonical maps $\Upsilon_{k,\sigma}$ and $\momenta_{k,\sigma}$ which give a comprehensive geometric interpretation of the integration-by-parts procedure on a vector bundle $\sigma$. We also state Lemma \ref{lem:Upsilon_canonical} about the universality of the map $\Upsilon_{k,\sigma}$, whose proof is postponed to the Appendix.

In Section \ref{sec:EL} we show how to apply our results to higher-order variational problems (Problem \ref{prob:var_M}). In particular, in Theorem \ref{thm:var_calc}, we obtain the general formula for the variation of the action including the forces and the generalized momenta along the trajectory. As a corollary we prove Theorem \ref{thm:EL}, which characterizes the extremals of Problem \ref{prob:var_M} in terms of higher-order Euler-Lagrange equations and general transversality conditions. We also give a geometric and local description of the Euler-Lagrange equations and generalized momenta.

Finally, in Section \ref{sec:examples}, we briefly  discuss different approaches to the geometry of higher-order variational calculus and relate our work in this direction with the papers of Tulczyjew \cite{Tulcz_diff_lagr_1975,Tulcz_ehres_jet_theory_2006}. Later we sketch some result from \cite{MJ_MR_higher_algebroids_2013}, which show an application of our results to higher-order variational problems on algebroids. We end with an example of Riemannian cubic polynomials.


\newpage
\section{Preliminaries}\label{sec:perliminaries}\label{sec:notation}

\paragraph{Higher tangent bundles.}
Throughout the paper we shall work with \emph{higher tangent bundles} and use the standard notation  $\TT k M$ for the \emph{$\thh{k}$ tangent bundle} of the manifold $M$. Points in the total space of this bundle will be called \emph{$k$--velocities}. An element represented by a curve $\gamma:[t_0,t_1]\ra M$ at $t\in[t_0,t_1]$ will be denoted by $\jet k\gamma(t)$ or $\jet k_t\gamma(t)$.

The $\st{k+1}$ tangent bundle is canonically included in the tangent space of the $\thh{k}$ tangent bundle (see, e.g., \cite{Tulcz_lagr_diff_1976}):
$$\iota^{1,k}:\TT{k+1}M\subset\T\TT k M, \quad \jet {k+1}_{t=0}\gamma(t)\longmapsto \jet 1_{t=0}\jet k_{s=0} \gamma(t+s).$$
The composition of this injection with the canonical projection $\tau_{\TT k M}:\T\TT kM\ra\TT kM$ defines the structure of the
\emph{tower of higher tangent bundles}:
$$\TT k M\lra\TT {k-1} M\lra \TT {k-2} M\lra\hdots\lra\T M\lra M.$$
 The canonical projections from higher to lower-order tangent bundles will be denoted by $\tauM ks:\TT k M\ra\TT {s} M$ (for $k\geq s$). Instead of $\tauM k0:\TT kM\ra M$ we simply write  $\tau^k$ and, instead of $\tauM 10=\tau^1:\T M\ra M$, just  use the standard symbol $\tau$. The cotangent fibration is denoted by $\tau^\ast:\TT\ast M\ra M$.

Another important constructions are the \emph{iterated tangent bundles} $\Tt k M:=\T\hdots\T M$ and the \emph{iterated higher tangent bundles} $\TT {n_1}\hdots \TT {n_r} M$. Elements of the latter will be called \emph{ $(n_1,\hdots,n_r)$--velocities}.  For notational simplicity we shall also denote the iterated higher tangent functor $\T^{n_1} \T^{n_2}\ldots \T^{n_r}$ by $\TT{n_1, \ldots, n_r}$.

Of our particular interest will be the bundles $\TT k\TT l M=\TT{k,l}M$. These bundles admit natural projections to lower-order tangent bundles which will be denoted by $\tauM{(k,l)}{(k',l')}:\TT k\TT lM\ra\TT {k'}\TT {l'}M$ (for $k\geq k'$ and $l\geq l'$). (Iterated higher) tangent bundles are subject to a number of natural inclusions such as already mentioned $\iota^{1,k}:\TT {k+1} M\subset \T\TT kM$, $\iota^{l,k}:\TT {k+l}M\subset\TT l\TT k M$, $\iota^k:\TT k M\subset \Tt k M$, etc., which will be used  extensively.

 Occasionally, when dealing with manifolds other than $M$, or when it can lead to confusions, we will add a suffix to the maps $\iota^{\dots}_{\dots}$, $\tau^{\hdots}_{\hdots}$, etc., to emphasize which manifold we are working with, e.g., $\tauM kl=\tau^k_{l,M}$, etc.

Given a smooth function $f$ on a manifold $M$ one can construct functions $f^{(\alpha)}$ on $\TT k M$, with $0\leq \alpha\leq k$, the so called \emph{$(\alpha)$-lifts} of $f$ (see~\cite{Morimoto_Lifts}), defined by
\begin{equation}\label{eqn:alpha_lift}
f^{(\alpha)}(\jet k_0\gamma(t)):= \left.\frac{\dd^\alpha}{\dd t^\alpha}\right|_{t=0} f(\gamma(t)).
\end{equation}
The functions $f^{(k)}:\TT kM\to \R$ and $f^{(1)}:\T M\to \R$ are called the \emph{complete lift} and the \emph{tangent lift} of $f$, respectively.
By iterating this construction we obtain functions $f^{(\alpha, \beta)} := (f^{(\beta)})^{(\alpha)}$ on $\TT k \TT l M$ for $0\leq \alpha\leq k$, $0\leq \beta\leq l$, and, generally, functions $f^{(\eps_1, \ldots, \eps_r)}$ on $\TT {n_1} \ldots \TT {n_r} M$ for $0\leq \eps_j\leq n_j$, $1\leq j\leq r$.
A coordinate system $(x^a)$ on $M$ gives rise to the so-called \emph{adapted coordinate systems} $(x^{a, (\alpha)})_{0\leq \alpha \leq k}$ on $\TT k M$  and $(x^{a, (\eps)})_\eps$ on $\TT {n_1} \ldots \TT {n_r} M$ where the multi-index $\eps=(\eps_1, \ldots \eps_r)$ is as before, and $x^{a, (\alpha)}$, $x^{a, (\eps)}$ are obtained from $x^a$ by the above lifting procedure. Within this notation we easily find that the canonical inclusion $\iota^k:\TT kM\to \Tt k M$ is given by
\begin{equation}\label{eqn:coordinates_iota}
(\iota^k)^\ast (x^{a, (\eps)}) = x^{a,(\eps_1+\ldots+ \eps_k)},
\end{equation}
where $\eps\in \{0, 1\}^k$.

Let us remark that in the definition of $f^{(\alpha)}$ we follow the convention of \cite{Gay_Holm_Inn_inv_ho_var_probl_2012,Tulcz_diff_lagr_1975}, whereas  the original convention of \cite{Morimoto_Lifts} is slightly different, since it contains a normalizing factor $\frac 1{\alpha!}$ in front of the derivative.


\paragraph{Tangent lifts of vector bundles and canonical pairings.} Let us pass to another important tool for our analysis, namely the (iterated) higher tangent bundles of vector bundles. Let $\sigma:E\ra M$ be a vector bundle. It is clear that $\sigma$ may be lifted to  vector bundles $\TT k\sigma:\TT k E\ra\TT k M$, $\Tt k \sigma:\Tt k E\ra \Tt k M$, $\TT k\TT l\sigma:\TT k\TT l E\ra\TT k\TT l M$, etc.
Let $\sigma^\ast:E^\ast\ra M$ be the bundle dual to $\sigma$.

Throughout the paper we denote by $(x^a)$ the coordinates on the base $M$,
by $(y^i)$ the linear coordinates on the fibers of $\sigma$, and by $(\xi_i)$ the linear coordinates on the fibers of the dual bundle $\sigma^\ast:\E\ast\ra M$. Natural weighted coordinates on (iterated higher) tangent lifts of $\sigma$ and $\sigma^\ast$ are constructed from $x^a$, $y^i$, $\xi_j$ by the lifting procedure mentioned above. They are denoted by adding the proper degree to a coordinate symbol. Degrees will be denoted by bracketed small Greek letters: $(\eps)$, $(\alpha)$, $(\beta)$, etc.

The natural pairing $\<\cdot,\cdot>_{\sigma}$ between $E$ and $E^\ast$ induces a non-degenerated pairing
between  $\T E$ and $\T E^\ast$ over $\T M$:
$$
\<\cdot,\cdot>_{\T \sigma} := \<\cdot, \cdot>_{\sigma}^{(1)}: \T(E^\ast \times_M E) \simeq \T E^\ast\times_{\T M}\T E  \ra\R,
$$
i.e., $\<\cdot,\cdot>_{\T \sigma}$ is the tangent lift of $\<\cdot,\cdot>_{\sigma}$.
In a similar way, $\<\cdot,\cdot>_{\sigma}$ can be lifted  to non-degenerate pairings on higher tangent prolongations of $E$ and $E^\ast$.

\begin{prop}\label{prop:pairings} Let $\<\cdot,\cdot>_\sigma:E^\ast\times_ME\ra\R$ be the natural pairing. Let us define inductively
$$\<\cdot,\cdot>_{\Tt k\sigma}:\Tt kE^\ast\times_{\Tt k M}\Tt kE\ra\R$$
as the tangent lift of $\<\cdot,\cdot>_{\Tt {k-1}\sigma}$ (for $k\geq 1$) and let
$$\<\cdot,\cdot>_{\TT k\sigma}:\TT kE^\ast\times_{\TT k M}\TT kE\ra\R$$
be the restriction of $\<\cdot,\cdot>_{\Tt k\sigma}$ to the product of the subbundles $\TT k\sigma^\ast\subset\Tt k \sigma^\ast$
and $\TT k\sigma\subset\Tt k \sigma$. Then
\begin{enumerate}[(a)]
\item in the local coordinates denoted by
$(x^{a,(\epsilon)}, y^{i,(\epsilon)})$ and $(x^{a,(\epsilon)}, \xi_i^{(\epsilon)})$, with $\epsilon\in\{0,1\}^k$ (resp.
$(x^{a,(\alpha)}, y^{i,(\alpha)})$ and $(x^{a,(\alpha)}, \xi_i^{(\alpha)})$ with $0\leq \alpha \leq k$), on $\T^{(k)}E$ and $\T^{(k)}\E\ast$ (resp. $\T^{k}E$ and $\T^{k}\E\ast$),
\begin{equation}\label{eqn:pairingOne}
\<(x^{a,(\epsilon)}, \xi_i^{(\epsilon)}), (x^{a,(\epsilon)}, y^{i,(\epsilon)})>_{\T^{(k)}\sigma}=
\sum_{i}\sum_{\epsilon\in\{0,1\}^k} \xi_i^{(\epsilon)}\,y^{i,{(1, \ldots,1)-(\epsilon)}},
\end{equation}
\begin{equation}\label{eqn:pairingTwo}
\<(x^{a,(\alpha)}, \xi_i^{(\alpha)}), (x^{a,(\alpha)}, y^{i,(\alpha)})>_{\T^{k}\sigma}=
\sum_i\sum_{0\leq\alpha \leq k} \binom{k}{\alpha}\xi_i^{(\alpha)} \,y^{i,(k-\alpha)}.
\end{equation}
\item  $\<\cdot,\cdot>_{\Tt k\sigma}$ and $\<\cdot,\cdot>_{\TT k\sigma}$ are non-degenerate pairings.
\item $\<\cdot,\cdot>_{\TT k \sigma} = \<\cdot, \cdot>_{\sigma}^{(k)}$, i.e., the pairing $\<\cdot,\cdot>_{\TT k \sigma}$ is the complete lift of $\<\cdot, \cdot>_{\sigma}$ (up to the isomorphism $\TT k(E^\ast \times_M E) \simeq \TT k E^\ast\times_{\TT k M}\TT k E$).
\end{enumerate}
\end{prop}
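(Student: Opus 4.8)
The plan is to prove part (a) by a direct computation in adapted coordinates and then to read off (b) and (c) from the resulting formulas. Throughout I would work in a local trivialisation of $\sigma$, together with the dual trivialisation of $\sigma^\ast$, so that $\langle\cdot,\cdot\rangle_\sigma = \sum_i \xi_i y^i$ with constant coefficients. For \eqref{eqn:pairingOne} I argue by induction on $k$, the case $k=0$ being the definition of $\langle\cdot,\cdot\rangle_\sigma$. Recall that the tangent lift of a function $g$ on a manifold is $g^{(1)} = \sum_A z^{A,(1)}\,\partial_{z^{A,(0)}}g$ in adapted coordinates $(z^{A,(0)},z^{A,(1)})$; in particular the tangent lift of a bilinear form $\sum_{ij}c_{ij}u^iv^j$ with constant $c_{ij}$ is $\sum_{ij}c_{ij}\big(u^{i,(1)}v^{j,(0)}+u^{i,(0)}v^{j,(1)}\big)$, which is again bilinear with constant coefficients. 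Applying this to $\langle\cdot,\cdot\rangle_{\TT{(k-1)}\sigma}$ and identifying the adapted coordinates on $\T\,\TT{(k-1)}E = \TT{(k)}E$ (and likewise on $\TT{(k)}E^\ast$) with the $(x^{a,(\epsilon)},y^{i,(\epsilon)})$, $\epsilon\in\{0,1\}^k$, in such a way that the first entry of $\epsilon$ records the outermost tangent functor, one sees that the two sums produced by the lift combine into a single sum over all $\epsilon\in\{0,1\}^k$, with the multi-index $(1,\ldots,1)-\epsilon$ attached to the $y$-variable being exactly the complement of $\epsilon$; this is precisely \eqref{eqn:pairingOne}. Already to make sense of the inductive definition one uses here that $\T$, hence $\TT{k}$ and $\TT{(k)}$, commutes with the fibre product over $M$, so that these pairings genuinely live on $\TT{(k)}E^\ast\times_{\TT{(k)}M}\TT{(k)}E$, etc.

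Formula \eqref{eqn:pairingTwo} follows by restricting \eqref{eqn:pairingOne} along $\iota^k\times\iota^k$. The identity \eqref{eqn:coordinates_iota} was stated for a base coordinate, but being merely the lifting procedure applied to an arbitrary coordinate it holds verbatim for the fibre coordinates, so that $(\iota^k)^\ast\xi_i^{(\epsilon)} = \xi_i^{(|\epsilon|)}$ and $(\iota^k)^\ast y^{i,(\epsilon)} = y^{i,(|\epsilon|)}$, where $|\epsilon| = \epsilon_1+\cdots+\epsilon_k$. Substituting into \eqref{eqn:pairingOne}, the term indexed by $\epsilon$ becomes $\xi_i^{(|\epsilon|)}\,y^{i,(k-|\epsilon|)}$; since there are exactly $\binom{k}{\alpha}$ multi-indices $\epsilon\in\{0,1\}^k$ with $|\epsilon|=\alpha$, summing over $\epsilon$ gives \eqref{eqn:pairingTwo}. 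Part (b) is now immediate: at a fixed point of $\TT{(k)}M$ inside a trivialisation, the Gram matrix of $\langle\cdot,\cdot\rangle_{\TT{(k)}\sigma}$, with rows indexed by $\xi_i^{(\epsilon)}$ and columns by $y^{j,(\delta)}$, is diagonal in the bundle index $i=j$ and, in the degrees, equal to the permutation matrix $\epsilon\mapsto(1,\ldots,1)-\epsilon$, hence invertible; by \eqref{eqn:pairingTwo} the Gram matrix of $\langle\cdot,\cdot\rangle_{\TT{k}\sigma}$ is anti-diagonal in the degrees with non-zero entries $\binom{k}{\alpha}$, hence also invertible. As non-degeneracy is a fibrewise, thus local, condition, this proves (b).

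For (c) I compute the complete lift directly. From \eqref{eqn:alpha_lift} and the Leibniz rule one has $(fg)^{(k)} = \sum_{\alpha=0}^{k}\binom{k}{\alpha}f^{(\alpha)}g^{(k-\alpha)}$ for functions $f,g$ on any manifold. Applying this with $f=\xi_i$ and $g=y^i$, regarded as functions on $E^\ast\times_M E$, and using that the $\alpha$-lift commutes with pullback along the two projections — so that, under the isomorphism $\TT{k}(E^\ast\times_M E)\simeq\TT{k} E^\ast\times_{\TT{k} M}\TT{k} E$, the lift $(\xi_i)^{(\alpha)}$ is (the pullback of) the coordinate $\xi_i^{(\alpha)}$ on $\TT{k} E^\ast$ and $(y^i)^{(\beta)}$ is (the pullback of) $y^{i,(\beta)}$ on $\TT{k} E$ — one gets $\langle\cdot,\cdot\rangle_\sigma^{(k)} = \sum_i\sum_{\alpha=0}^k\binom{k}{\alpha}\xi_i^{(\alpha)}\,y^{i,(k-\alpha)}$, which is exactly the right-hand side of \eqref{eqn:pairingTwo}. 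Hence $\langle\cdot,\cdot\rangle_{\TT{k}\sigma} = \langle\cdot,\cdot\rangle_\sigma^{(k)}$. Alternatively, since $\langle\cdot,\cdot\rangle_{\TT{(k)}\sigma}$ is by construction the $k$-fold iterated tangent lift of $\langle\cdot,\cdot\rangle_\sigma$, statement (c) is an instance of the general fact that restricting the iterated tangent lift along $\iota^k$ recovers the complete lift.

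I expect the main obstacle to be purely notational: one has to keep track of which entry of the multi-index $\epsilon\in\{0,1\}^k$ corresponds to the outermost tangent functor — equivalently, of how adapted coordinates before and after a single tangent lift are related — so that the ``complement'' combinatorics of \eqref{eqn:pairingOne} propagates correctly through the induction. One should also record, even though it is routine, that $\TT{k}$ and $\TT{(k)}$ preserve fibre products over $M$, which is what permits the iterated tangent lifts to be viewed as functions on the fibre-product manifolds appearing in the statement.
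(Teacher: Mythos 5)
Your proposal is correct and follows essentially the same route as the paper: compute \eqref{eqn:pairingOne} by iterated tangent lifting (differentiation) in adapted coordinates, obtain \eqref{eqn:pairingTwo} by restricting along $\iota^k$ using \eqref{eqn:coordinates_iota} and counting the $\binom{k}{\alpha}$ multi-indices of each total degree, read off non-degeneracy from these local formulas (together with the fact that the tangent lift of a non-degenerate pairing is non-degenerate), and identify \eqref{eqn:pairingTwo} with the complete lift via the Leibniz rule. You merely spell out details the paper leaves implicit, such as the induction bookkeeping for the multi-index complement and the extension of \eqref{eqn:coordinates_iota} to fibre coordinates, so no further changes are needed.
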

\begin{proof}
We get (\ref{eqn:pairingOne}) by iterated differentiation of the function $\<\cdot, \cdot>_\sigma:
\left((x^a, \xi_i), (x^a, y^i)\right)\mapsto \sum_i \xi_i\,y^i$. Taking into account (\ref{eqn:coordinates_iota}) we find (\ref{eqn:pairingTwo}). Since the tangent lift of a non-degenerate pairing is non-degenerate, so it is for $\<\cdot,\cdot>_{\Tt k \sigma}$. The  non-degeneracy of the pairings $\<\cdot,\cdot>_{\Tt k \sigma}$ and $\<\cdot,\cdot>_{\TT k \sigma}$ can also be easily seen from the above local formulas. Finally, (c) follows immediately from the definition of the complete lift and the local expression \eqref{eqn:pairingTwo} of $\<\cdot,\cdot>_{\TT k \sigma}$.
\end{proof}
We stress that the above lifting procedure can be expressed in the framework of Weil functors and Frobenius algebras \cite{Weil_theo_point_proches_1953, Kolar_Michor_Slovak_nat_oper_diff_geom_1993}.


\paragraph{Canonical flip $\kappa_{k}$ and its dual $\veps_{k}$.}
It is well-known that the iterated tangent bundle $\T\T M$ admits an involutive double vector bundle isomorphism (called the \emph{canonical flip})
$$\kappa:\T\T M\lra\T\T M,$$
which intertwines the projections $\tau_{\T M}:\T\T M\ra\T M$ and $\T\tau:\T\T M\ra\T M$ (see, e.g.,  \cite{Mackenzie_lie_2005}). Such a notion of canonical flip can be generalized to a family of isomorphisms
$$\kappa_{k}:\TT k\T M\ra\T\TT kM,\quad \jet k_{t=0}\jet 1_{s=0}\gamma(t, s)\mapsto \jet 1_{s=0}\jet k_{t=0}\gamma(t, s),$$
which map the projection $\TT k\tau:\TT k\T M\ra\TT k M$ to $\tau_{\TT kM}:\T\TT kM\ra\TT kM$ over $\id_{\TT kM}$ and $\tau^k_{\T M}:\TT k\T M\ra \T M$ to $\T\tau^k:\T\TT kM\ra\T M$ over $\id_{\T M}$. Morphisms $\kappa_{k}$ can be also defined inductively as follows: $\kappa_{1}:=\kappa$ and $\kappa_{k+1}:=\T\kappa_{k}\circ\kappa_{\TT kM}\big|_{\TT{k+1}\T M}$, i.e., as the unique morphism making the diagram
\begin{equation}\label{eqn:kappa_kM}
\xymatrix{
\T\TT k\T M\ar[rr]^{\T\kappa_{k}} &&\T\T\TT kM\ar[rr]^{\kappa_{\TT kM}} && \T\T\TT kM\\
\TT{k+1}\T M\ar@{-->}[rrrr]^{\kappa_{k+1}}\ar@{_{(}->}[u] &&&&\T\TT{k+1}M\ar@{_{(}->}[u] }
\end{equation}
commutative. The local description of $\kappa_{k}$ is very simple. If $x^{a,(\alpha,\epsilon)}$ are natural coordinates on $\TT k\T M$ and $x^{a,(\epsilon,\alpha)}$ are natural adapted coordinates on $\T\TT k M$ (with $0\leq\alpha\leq k$ and $\epsilon=0,1$), then
$x^{a,(\alpha,\epsilon)}$  corresponds to  $x^{a,(\epsilon,\alpha)}$ via $\kappa_{k}$.

Introduce now the dual $\veps_{k}:\T^\ast\TT kM\ra\TT k\T^\ast M$ of the canonical flip $\kappa_{k}$ defined via the equality, 
\begin{equation}\label{eqn:kappa_eps}
\<\Psi,\kappa_{k}\circ V>_{\tau_{\TT kM}}=\<\veps_{k}\circ\Psi,V>_{\TT k\tau},
\end{equation}
where $V\in \TT k\T M$ and $\Psi\in\T^\ast\TT kM$ is a vector such that both pairings make sense (cf. \cite{Cantrin_Crampin_Inn_can_isom_1989}).
Formula \eqref{eqn:kappa_eps} shows that $\kappa_k$ and $\varepsilon_k$ are ``adjoint'' to each other with respect to canonical pairings, as schematically shown by the commutative diagram  
$$\xymatrix{&\T\TT k M\ar@{..>}[ld] &&\TT k\T M\ar@{..>}[rd]\ar[ll]_{\kappa_{k}}&\\
\R &\<\cdot,\cdot>_{\tau_{\TT kM}}&&\<\cdot,\cdot>_{\TT k\tau_M}&\R\\
&\T^\ast\TT k M \ar@{..>}[lu]\ar[rr]^{\veps_{k}} &&\TT k\T^\ast M.\ar@{..>}[ur]&}$$

In the coordinates $(x^{a, (\alpha)}, p_{a, (\alpha)} = \partial_{x^{a, (\alpha)}})$ on $\TT\ast\TT kM$ and $(x^{a, (\alpha)}, p_a^{(\alpha)})$ on $\TT k\TT\ast M$ (adapted from
standard coordinates $(x^{a, (\alpha)})$ on $\TT kM$, and $(x^a, p_a)$ on $\TT\ast M$, respectively), we find from \eqref{eqn:pairingTwo} that
\begin{equation}\label{eqn:eps_kM}
\veps_{k}\left(x^{a, (\alpha)}, p_{a, (\alpha)}\right) = \left(x^{a, (\alpha)}, p_a^{(\alpha)} = \binom{k}{\alpha}^{-1} p_{a, (k-\alpha)}\right).
\end{equation}


\newpage
\section{The main result}\label{sec:lemma}

In this section  the construction of the vector bundle morphisms $\Upsilon_{k,\sigma}$ and $\momenta_{k,\sigma}$, associated with an integer $k$ and a vector bundle $\sigma: E\ra M$, will be described. These morphisms are closely related with the geometric integration-by-parts procedure and will play a crucial role in the geometric construction of the Euler-Lagrange equations in the next Section \ref{sec:EL}.


\paragraph{Bundles of semi-holonomic vectors.}
\begin{df} 
For non-negative numbers $n_1, \ldots, n_r\geq 0$ let denote $\bar{n}:=n_1+\ldots+n_r$. The set
$$
\Tsemihol{n_1, \ldots, n_r} E=\left(\iota^{n_1,\ldots, n_r}_M\right)^\ast\T^{n_1, \ldots, n_r} E=
\{X\in\T^{n_1, \ldots, n_r} E: \T^{n_1, \ldots, n_r}\sigma(X)\in\T^{\bar{n}} M\subset\T^{n_1, \ldots, n_r}M\},
$$
consisting of all $(n_1,\ldots, n_r)$--velocities in $E$ projecting to $\bar{n}$--velocities (\emph{holonomic vectors}) in $\T^{n_1, \ldots, n_r} M$, is a vector subbundle of $\TT{n_1,\hdots,n_r}\sigma:\TT{n_1,\hdots,n_r}E\ra\TT{n_1,\hdots,n_r}M$. The restriction $\Tsemihol{n_1, \ldots, n_r} \sigma$ of $\TT{n_1,\hdots,n_r}\sigma$ to $\Tsemihol{n_1, \ldots, n_r} E$
 is \emph{the bundle of semi-holonomic vectors}. Given a vector bundle $\sigma':E'\to M'$ and a  morphism $\phi:\sigma\ra\sigma'$ we define $\Tsemihol{n_1,\hdots,n_r}\phi:\Tsemihol{n_1,\hdots,n_r}\sigma\ra\Tsemihol{n_1,\hdots,n_r}\sigma'$ as the restriction of $\TT{n_1,\hdots,n_r}\phi$ to $\Tsemihol{n_1,\hdots,n_r}E$. Thus $\Tsemihol{n_1,\hdots,n_r}$ is a functor in the category of vector bundles.

 In agreement with our previous notation we shall denote by $\Ttsemihol{k} E=\Tsemihol{1,\hdots,1}E$ the subbundle of semi-holonomic velocities in $\Tt k E=\TT{1,\hdots,1}E$.
\end{df}

In future considerations  the bundles $\Tsemihol{k,k}E$ and $\Ttsemihol{k} E$ will be of our special interest. Let us remark that although, in general, there is no canonical projection $\Tt k E\ra \TT k E$, there exists a natural projection  $\PP_k:\Ttsemihol k E\ra \TT k E$. It is defined as the left inverse to the canonical inclusion $\iota^k_E$ but considered as a map to $\Tsemihol{k} E$, as is explained in the following proposition.
\begin{prop}\label{prop:def_Pk} Consider a semi-holonomic vector $X\in\Ttsemihol{k}E \subset \Tt k E $ lying over the $k$--velocity $v^k\in\TT kM\subset\Tt kM$. Then the formula
$$
\<\PP_k(X),\Psi>_{\TT k\sigma}=\<X,\Psi>_{\Tt k\sigma},
$$
where $\Psi\in\TT k\E\ast\subset\Tt k\E\ast$ lies over $v^k$, defines a canonical projection $\PP_k: \Ttsemihol{k} E\to \TT kE$, i.e., $\PP_k\circ \iota_E^k = \id_{\TT k E}$.
Moreover, $P_k$ is a vector bundle morphism and it can be expressed in local coordinates as
$$\PP_k\left(x^{a,(\epsilon)},y^{i,(\epsilon)}\right)=\left(x^{a,(\alpha)},\overline{y}^{i,(\alpha)}\right),$$
where $ \overline{y}^{i,(\alpha)}=\binom k\alpha^{-1}\displaystyle \sum_{|\epsilon|=\alpha}y^{i,(\epsilon)}$ is the arithmetic average of all the coordinates of total degree $\alpha$.
\end{prop}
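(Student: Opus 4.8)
The plan is to verify directly from the defining pairing equation that $\PP_k$ is well-defined, compute it in coordinates, and read off that it is a left inverse to $\iota^k_E$ and a vector bundle morphism. First I would note that the pairing $\<\cdot,\cdot>_{\Tt k \sigma}$ is non-degenerate (Proposition \ref{prop:pairings}(b)), but to define $\PP_k(X)$ we need non-degeneracy of $\<\cdot,\cdot>_{\TT k\sigma}$ together with the observation that, since $X$ and $\Psi$ both lie over the \emph{holonomic} velocity $v^k\in\TT k M\subset\Tt k M$, the right-hand side $\<X,\Psi>_{\Tt k\sigma}$ depends on $X$ only through a fibrewise-linear functional on $\TT k\E\ast_{v^k}$; non-degeneracy of $\<\cdot,\cdot>_{\TT k\sigma}$ then produces a unique $\PP_k(X)\in\TT k E$ over $v^k$ representing it. Linearity of $\PP_k$ in $X$ (and smoothness) is immediate because both pairings are bilinear and the whole construction is a fibrewise linear-algebra operation depending smoothly on the base point.

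Next I would carry out the coordinate computation, which is the heart of the statement. Writing $X=(x^{a,(\epsilon)},y^{i,(\epsilon)})$ with $\epsilon\in\{0,1\}^k$ and $\Psi=(x^{a,(\epsilon)},\xi_i^{(\epsilon)})$, formula \eqref{eqn:pairingOne} gives
\begin{equation}\label{eqn:Pk_proof_lhs}
\<X,\Psi>_{\Tt k\sigma}=\sum_i\sum_{\epsilon\in\{0,1\}^k}\xi_i^{(\epsilon)}\,y^{i,(1,\ldots,1)-(\epsilon)}.
\end{equation}
Since $\Psi$ lies over a holonomic velocity, by \eqref{eqn:coordinates_iota} its coordinates satisfy $\xi_i^{(\epsilon)}=\xi_i^{(|\epsilon|)}$, depending only on $\alpha:=|\epsilon|$; pulling this out and using $|(1,\ldots,1)-\epsilon|=k-\alpha$, the sum \eqref{eqn:Pk_proof_lhs} collapses to $\sum_i\sum_{\alpha=0}^k\xi_i^{(\alpha)}\bigl(\sum_{|\epsilon|=k-\alpha}y^{i,(1,\ldots,1)-(\epsilon)}\bigr)=\sum_i\sum_{\alpha=0}^k\xi_i^{(\alpha)}\bigl(\sum_{|\delta|=\alpha}y^{i,(\delta)}\bigr)$. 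On the other hand, by \eqref{eqn:pairingTwo} the left side of the defining equation is $\<\PP_k(X),\Psi>_{\TT k\sigma}=\sum_i\sum_{\alpha=0}^k\binom{k}{\alpha}\xi_i^{(\alpha)}\,\overline y^{i,(k-\alpha)}$; comparing coefficients of the independent quantities $\xi_i^{(\alpha)}$ (legitimate because $\Psi$ ranges over all of $\TT k\E\ast_{v^k}$, equivalently the $\xi_i^{(\alpha)}$, $0\le\alpha\le k$, are free) yields $\binom{k}{\alpha}\overline y^{i,(k-\alpha)}=\sum_{|\delta|=k-\alpha}y^{i,(\delta)}$, i.e. $\overline y^{i,(\alpha)}=\binom{k}{\alpha}^{-1}\sum_{|\epsilon|=\alpha}y^{i,(\epsilon)}$, as claimed.

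Finally, the identity $\PP_k\circ\iota^k_E=\id_{\TT k E}$ follows by feeding the coordinate formula \eqref{eqn:coordinates_iota} for $\iota^k_E$ into the just-derived expression: if $y^{i,(\epsilon)}=y^{i,(|\epsilon|)}$ comes from a genuine $k$-velocity, then $\overline y^{i,(\alpha)}=\binom{k}{\alpha}^{-1}\sum_{|\epsilon|=\alpha}y^{i,(\alpha)}=\binom{k}{\alpha}^{-1}\binom{k}{\alpha}y^{i,(\alpha)}=y^{i,(\alpha)}$, using that there are exactly $\binom{k}{\alpha}$ multi-indices $\epsilon\in\{0,1\}^k$ with $|\epsilon|=\alpha$. (Alternatively one sees this coordinate-free: for $X=\iota^k_E(v)$ with $v\in\TT k E$, the right-hand side of the defining equation equals $\<v,\Psi>_{\TT k\sigma}$ by Proposition \ref{prop:def_Pk}'s sister statement Proposition \ref{prop:pairings}(c), so $\PP_k(X)=v$ by non-degeneracy.) The smoothness and vector-bundle-morphism property over $\tauM{k}{0}$ is then transparent from the coordinate formula, which is linear in the fibre coordinates $y^{i,(\epsilon)}$ with constant coefficients. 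I expect the only genuinely delicate point to be the well-definedness argument — making precise that the right-hand side of the defining equation descends to a functional on $\TT k\E\ast_{v^k}$ rather than merely on $\Tt k\E\ast_{v^k}$, which is exactly where holonomicity of the base velocity $v^k$ is used; the rest is bookkeeping with binomial coefficients and counting multi-indices of fixed weight.
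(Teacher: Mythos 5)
Your argument is correct and is essentially the paper's own proof: the paper disposes of the proposition by saying it ``follows immediately from the properties of the non-degenerate pairing $\<\cdot,\cdot>_{\TT k\sigma}$'' of Proposition \ref{prop:pairings}, and you simply spell out those details — non-degeneracy over the holonomic base point $v^k$ for well-definedness, then the local formulas \eqref{eqn:pairingOne}, \eqref{eqn:pairingTwo} and \eqref{eqn:coordinates_iota} for the coordinate expression and the identity $\PP_k\circ\iota^k_E=\id_{\TT kE}$. The only blemish is an index slip in your intermediate display (after grouping, the coefficient of $\xi_i^{(\alpha)}$ is $\sum_{|\delta|=k-\alpha}y^{i,(\delta)}$, not $\sum_{|\delta|=\alpha}y^{i,(\delta)}$), but the comparison equation you then write, $\binom{k}{\alpha}\overline{y}^{i,(k-\alpha)}=\sum_{|\delta|=k-\alpha}y^{i,(\delta)}$, and the resulting averaging formula are correct.
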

\begin{proof} It follows immediately from the properties of the non-degenerate pairing $\<\cdot, \cdot >_{\TT k \sigma}$ (see Proposition~\ref{prop:pairings}).
\end{proof}

\paragraph{The map $\bm{\Upsilon_{k,\sigma}}$.}

The result below describes the construction of a certain canonical and universal vector bundle morphism $\Upsilon_{k,\sigma}$ from $\Thol{k} \sigma$ to $\sigma$. The precise sense of the word \emph{universal} will be given later in Lemma \ref{lem:Upsilon_canonical}. Informally speaking, any other morphism  $\Thol{k} \sigma\ra\sigma$  can be derived in an easy way from $\Thol{k}$. In the next Section \ref{sec:EL} we show that this morphism is directly connected with integration by parts in the procedure of deriving the Euler-Lagrange equations.

To fix some notation denote an element
$\Phi\in \Thol{k} E\subset \TT k\TT k E$ by $\Phi^{(k,k)}$ and its projections to lower-order
velocities by
$$\Phi^{(m,n)}:=\tauE{(k,k)}{(m,n),E}(\Phi)\in\TT m\TT nE,$$
where $m,n\leq k$. Observe that since $\Phi$ lies over some $2k$--velocity, say $v^{2k}\in\TT {2k} M$, then all the elements $\Phi^{(m,n)}$ project under $\TT m\TT n\sigma$ to a fixed $m+n$--velocity $v^{m+n}\in \TT {m+n}M\subset \TT m\TT n M$ independently on the numbers in the sum $m+n$. In particular, different elements $\Phi^{(m,n)}$ with $m+n$ fixed, belonging \emph{a priori} to different bundles $\TT m\TT n E$, can be added together in the vector bundle $\Tt {m+n}\sigma:\Tt {m+n} E\ra\Tt {m+n} M$, which contains all of them. Denote by $\wt\Phi$ the following element of $\Tt k E$:
\begin{equation}\label{eqn:inner_Upsilon}
\wt\Phi:=
\Phi^{(0,k)}-\binom k 1\Phi^{(1,k-1)}+\binom k 2\Phi^{(2,k-2)}+\hdots+(-1)^k\Phi^{(k,0)}.
\end{equation}
 Similarly, define a morphism $\momenta_{k,\sigma}:\Tsemihol{k,k}\sigma \ra\TT k \sigma$ by the formula
\begin{equation}\label{eqn:moment_map}
\momenta_{k,\sigma}\left(\Phi^{(k,k)}\right):=
\PP_k\left[\binom{k+1}1\Phi^{(0,k)}-\binom{k+1}2\Phi^{(1,k-1)}+\hdots+
(-1)^{k+1}\binom{k+1}{k+1}\Phi^{(k,0)}\right],
\end{equation}
where $\PP_k:\Ttsemihol{k} E\ra\TT kE$ was defined in  Proposition~\ref{prop:def_Pk}.

Now we are ready to state the main result of this section.

\begin{thm}[Bundle-theoretic integration by part]\label{thm:k_hol} Let $\Phi=\Phi^{(k,k)}\in\Thol{k} E$ be a semi-holonomic vector projecting to $v^{2k}\in\TT{2k}M$ and let $v^k:=\tauM{2k}k(v^{2k})\in \TT k M$. Let
$\jet k\xi$ be any element in $\TT k_\xi E^\ast$ which projects to $v^k$ under $\TT k\sigma^\ast$.
Then, if $\wt\Phi$ is given by \eqref{eqn:inner_Upsilon}, the value of $\<\wt\Phi, \jet k\xi>_{\Tt k\sigma}$ does not depend on the choice of $\jet k \xi$. Hence it  defines a canonical vector bundle morphism $\Upsilon_{k,\sigma}:\Thol{k}\sigma\ra \sigma$ covering $\tau^{2k}:\TT {2k} M\ra M$ given by
\begin{equation}\label{eqn:Upsilon}
 \quad \<\Upsilon_{k,\sigma}(\Phi),\xi>_\sigma:=\<\wt\Phi, \jet k\xi>_{\Tt k\sigma} = \<\sum_{j=0}^k (-1)^j \binom k j \Phi^{(j,k-j)}, \jet k\xi>_{\Tt k\sigma}.
\end{equation}

Moreover,
\begin{enumerate}[(a)]
\item \label{cond:Upsilon_local} in coordinates, if $\Phi\sim\left(x^{a,(\alpha)},y^{j,(\beta,\gamma )}\right)$
where $0\leq \alpha\leq 2k$ and $0\leq \beta,\gamma\leq k$, then
\begin{equation}\label{eqn:Upsilon_local}
\Upsilon_{k, \sigma}(\Phi) = \left(x^a, \sum_{\alpha=0}^k (-1)^\alpha \binom{k}{\alpha} y^{i,(\alpha,k-\alpha)}\right);
\end{equation}

\item\label{cond:Upsilon_inductive} $\Upsilon_{k, \sigma}$ satisfies the recurrence formulas
\begin{align}
&\label{eqn:Upsilon_1}\Upsilon_{1,\sigma}(\Phi)=
\nu_E\left[\tau_{\T E}(\Phi)-\T\tau_E(\Phi)\right],\\
&\Upsilon_{k,\sigma}=\Upsilon_{1,\sigma}\circ\Upsilon_{k-1,\T\T \sigma}\Big|_{\Thol{k}E},\label{eqn:Upsilon_k}
\end{align}
where $\nu_E$ denotes the projection  $\T E\big|_M\cong E\times_M\T M \to E$;

\item \label{cond:momenta_inductive} $\momenta_{k, \sigma}$ satisfies the recurrence formulas
\begin{align}
&\momenta_{0,\sigma}=\id_{\sigma},\label{eqn:momenta_0}\\
&\<\momenta_{k,\sigma}\left(\Phi^{(k,k)}\right),\jet k\xi>_{\TT k\sigma}=\<\Upsilon_{k,\sigma}\left(\Phi^{(k,k)}\right),\xi>_{\sigma}+\<\momenta_{k-1,\T\sigma}\left(\Phi^{(k-1,k)}\right),\jet k\xi>_{\Tt k\sigma},\label{eqn:momenta_inductive}
\end{align}
where in the last term we consider $\Phi^{(k-1,k)}$ as a semi-holonomic vector in $\Tsemihol {k-1, k-1}\T E\supset \Tsemihol {k-1,k} E$;

\item\label{cond:int_by-parts} $\Upsilon_{k, \sigma}$ and $\momenta_{k-1,\sigma}$ are related by the ``bundle-theoretic integration by parts formula''
\begin{equation}\label{eqn:green}
\<\Phi^{(0,k)},\jet k\xi>_{\Tt k\sigma}=\<\Upsilon_{k,\sigma}\left(\Phi^{(k,k)}\right),\xi>_{\sigma}+\<\T\momenta_{k-1,\sigma}\left(\Phi^{(k,k-1)}\right),\jet k\xi>_{\Tt k\sigma},
\end{equation}
where in the last term we consider $\Phi^{(k,k-1)}$ as a vector in $\T\Tsemihol {k-1, k-1} E\supset \Tsemihol {k, k-1} E$;

\item\label{cond:T_Upsilon} morphism  $\Upsilon_{k,\cdot}$ commutes with the tangent functor $\T$ up to the canonical isomorphism $\wt\kappa_{k,k,E}:\Tsemihol{k,k} \T E\ra\T\Tsemihol{k,k} E$, i.e., the diagram
\begin{equation}\label{eqn:Upsilon_and_t}
\xymatrix{\Tsemihol{k,k}\T E\ar[rr]^{\Upsilon_{k,\T\sigma}}\ar[d]_{\wt\kappa_{k,k,E}}&& \T E\\
\T\Tsemihol{k,k}E\ar[urr]_{\T\Upsilon_{k,\sigma}}}
\end{equation}
is commutative;
\item\label{cond:T_momenta} morphism $\momenta_{k,\cdot}$ commutes with the tangent functor $\T$ up to the canonical isomorphisms $\wt\kappa_{k,k,E}:\Tsemihol{k,k} \T E\ra\T\Tsemihol{k,k} E$ and $\kappa_{k,E}:\TT k\T E\ra\T\TT kE$, i.e., the diagram
\begin{equation}\label{eqn:momenta_and_t}
\xymatrix{\Tsemihol{k,k}\T E\ar[rr]^{\momenta_{k,\T\sigma}}\ar[d]_{\wt\kappa_{k,k,E}}&& \TT k\T E\ar[d]^{\kappa_{k,E}}\\
\T\Tsemihol{k,k}E\ar[rr]_{\T\momenta_{k,\sigma}}&&\T\TT k E }
\end{equation}
is commutative.
\end{enumerate}
\end{thm}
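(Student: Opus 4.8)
The plan is to prove the existence of $\Upsilon_{k,\sigma}$ together with the coordinate formula \eqref{eqn:Upsilon_local} by one explicit computation, and then to obtain the rest either by reading it off or by short algebraic manipulations. First I note that semi-holonomicity of $\Phi=\Phi^{(k,k)}$ is precisely what makes \eqref{eqn:inner_Upsilon} meaningful: each $\Phi^{(j,k-j)}\in\TT{j}\TT{k-j}E$, included into $\Tt{k}E$, projects under $\Tt{k}\sigma$ to the \emph{same} point $v^k\in\TT{k}M\subset\Tt{k}M$ (this is where holonomicity of the base enters), so the summands may be added over $v^k$, where $\jet k\xi\in\TT{k}E^\ast\subset\Tt{k}E^\ast$ also lives, and $\<\wt\Phi,\jet k\xi>_{\Tt{k}\sigma}$ is defined. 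Then I pass to adapted coordinates. By \eqref{eqn:coordinates_iota} and its iteration for $\iota^{j,k-j}$, an element of $\TT{j}\TT{k-j}E$ sitting in $\Tt{k}E$ has coordinate $y^{i,(\vec\epsilon)}$, $\vec\epsilon=(\vec\epsilon',\vec\epsilon'')\in\{0,1\}^{j}\times\{0,1\}^{k-j}$, equal to $y^{i,(|\vec\epsilon'|,|\vec\epsilon''|)}$; since $\tauE{(k,k)}{(j,k-j)}$ just forgets higher coordinates, $y^{i,(\vec\epsilon)}(\Phi^{(j,k-j)})=y^{i,(|\vec\epsilon'|,|\vec\epsilon''|)}(\Phi)$, while, $\jet k\xi$ being holonomic, $\xi_i^{(\vec\epsilon)}$ depends only on $|\vec\epsilon|$. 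Substituting into \eqref{eqn:pairingOne} and collecting terms, the coefficient of $\xi_i^{(0)}\,y^{i,(m,n)}(\Phi)$ in $\<\wt\Phi,\jet k\xi>_{\Tt{k}\sigma}$ is $\sum_{j}(-1)^{j}\binom kj\binom jm\binom{k-j}n$ (sum over $m\le j\le k-n$), which by elementary binomial identities and the vanishing of $\sum_{l=0}^{N}(-1)^{l}\binom Nl$ for $N\ge1$ equals $(-1)^{m}\binom km$ when $m+n=k$ and is $0$ otherwise. Hence
\[
\<\wt\Phi,\jet k\xi>_{\Tt{k}\sigma}=\sum_i\xi_i\sum_{m=0}^{k}(-1)^{m}\binom km\,y^{i,(m,k-m)}(\Phi),
\]
which is independent of the choice of $\jet k\xi$ over $v^k$ and linear in $\xi$; it therefore defines $\Upsilon_{k,\sigma}(\Phi)\in E$, a vector bundle morphism over $\tau^{2k}$ given by \eqref{eqn:Upsilon_local}. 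This settles the main statement and (a).

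For (b), formula \eqref{eqn:Upsilon_1} is the $k=1$ instance of \eqref{eqn:Upsilon} (equivalently of \eqref{eqn:Upsilon_local}): since $\Phi^{(0,1)}=\tau_{\T E}(\Phi)$ and $\Phi^{(1,0)}=\T\tau_E(\Phi)$, semi-holonomicity forces these to agree over both $E$ and $\T M$, so their difference is a vertical vector of $\T\sigma$, identified by $\nu_E$ with an element of $E$, and a check of coordinates (or of the tangent-lifted pairing $\<\cdot,\cdot>_\sigma^{(1)}$ on a vertical argument) gives \eqref{eqn:Upsilon_1}. Formula \eqref{eqn:Upsilon_k} I verify by comparing local expressions: one unravels the inclusion $\Thol{k}E\hookrightarrow\Tsemihol{k-1,k-1}\T\T E$ — which suppresses a ``middle'' copy of $\kappa_{k-1}$ — applies \eqref{eqn:Upsilon_local} to $\Upsilon_{k-1,\T\T\sigma}$ and then to $\Upsilon_{1,\sigma}$, and is left with $\sum_{\alpha}(-1)^{\alpha}\binom k\alpha\,y^{i,(\alpha,k-\alpha)}(\Phi)$ again, the identity needed being once more a vanishing alternating binomial sum. (Alternatively, with well-definedness in hand one may take \eqref{eqn:Upsilon_k} as an inductive definition and check consistency with \eqref{eqn:Upsilon_local}.)

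Assertion (c) I treat purely algebraically. Equation \eqref{eqn:momenta_0} is immediate from \eqref{eqn:moment_map} and $\PP_0=\id$. For \eqref{eqn:momenta_inductive}, pair \eqref{eqn:moment_map} with $\jet k\xi$, move $\PP_k$ across the pairing by Proposition~\ref{prop:def_Pk}, and split $\binom{k+1}{j+1}=\binom kj+\binom k{j+1}$ by Pascal's rule: the $\binom kj$ part reassembles into $\<\wt\Phi,\jet k\xi>_{\Tt{k}\sigma}=\<\Upsilon_{k,\sigma}(\Phi),\xi>_\sigma$ by \eqref{eqn:Upsilon}, while the $\binom k{j+1}$ part, after observing that the projections of $\Phi^{(k-1,k)}$ — taken inside $\Tsemihol{k-1,k-1}\T E$ as in the statement — are exactly the $\Phi^{(j,k-j)}$, becomes $\<\momenta_{k-1,\T\sigma}(\Phi^{(k-1,k)}),\jet k\xi>_{\Tt{k}\sigma}$ by Proposition~\ref{prop:def_Pk} applied to the bundle $\T\sigma$ in degree $k-1$, together with the identity $\<\cdot,\cdot>_{\Tt{k-1}\T\sigma}=\<\cdot,\cdot>_{\Tt{k}\sigma}$. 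The integration-by-parts formula \eqref{eqn:green} of (d) follows from the parallel manipulation applied to $\Phi^{(k,k-1)}$ in place of $\Phi^{(k-1,k)}$: expand $\<\Upsilon_{k,\sigma}(\Phi),\xi>_\sigma$ through \eqref{eqn:Upsilon}, peel off the $j=0$ term $\<\Phi^{(0,k)},\jet k\xi>_{\Tt{k}\sigma}$, and recognise the reindexed remaining alternating sum as $\<\T\momenta_{k-1,\sigma}(\Phi^{(k,k-1)}),\jet k\xi>_{\Tt{k}\sigma}$ — here one additionally uses that $\T$ commutes with $\PP_{k-1}$ and with the projections up to the canonical flips. (Equivalently, \eqref{eqn:green} is a formal consequence of \eqref{eqn:momenta_inductive} and the $(k-1)$-case of (f).)

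Finally, for (e) and (f) I argue by naturality. Every ingredient of the constructions — the projections $\tauE{(k,k)}{(m,n)}$, the inclusions $\iota^{\cdots}$, the pairing $\<\cdot,\cdot>_\sigma$ with its tangent and higher-tangent lifts, and $\PP_k$ — commutes with the tangent functor $\T$ up to the canonical flips $\kappa$, $\kappa_k$ and $\wt\kappa_{k,k}$; assembling these compatibilities one finds that $\wt\kappa_{k,k,E}$ carries the defining relation \eqref{eqn:Upsilon} for $\Upsilon_{k,\T\sigma}$ onto $\T$ of the one for $\Upsilon_{k,\sigma}$, which is the commutativity of \eqref{eqn:Upsilon_and_t}, and the same argument for $\momenta$, additionally tracking $\kappa_{k,E}$ on the target $\TT{k}\T E$, gives the commutativity of \eqref{eqn:momenta_and_t}. (Alternatively, and perhaps most quickly, \eqref{eqn:Upsilon_and_t} and \eqref{eqn:momenta_and_t} may be verified in coordinates, since $\T$, the flips $\kappa$, $\Upsilon$ via \eqref{eqn:Upsilon_local} and $\momenta$ via \eqref{eqn:moment_map} and Proposition~\ref{prop:def_Pk} all have elementary coordinate forms.) I expect the real difficulty throughout to lie not in the combinatorics — which is handled by the single fact that $\sum_{l}(-1)^{l}\binom Nl$ vanishes for $N\ge1$ — but in keeping track of the web of canonical inclusions and flips suppressed in the statements, in particular the ``middle'' $\kappa_{k-1}$ inside $\Thol{k}E\hookrightarrow\Tsemihol{k-1,k-1}\T\T E$ and the flips appearing in (e) and (f); the coordinate route has the merit of bypassing most of it.
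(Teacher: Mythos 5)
Your proposal is correct, but its core runs along a genuinely different route than the paper's. The paper establishes well-definedness of $\Upsilon_{k,\sigma}$ \emph{together with} the recurrence \eqref{eqn:Upsilon_k} by induction on $k$: the case $k=1$ via the double vector bundle structure of $\T\T E$, then the functoriality property \eqref{eqn:functoriality_Upsilon} of the lower-order maps (applied to $\tau_{\T E}$ and $\T\tau_E$, giving \eqref{eqn:01}--\eqref{eqn:10}) combined with a Pascal-type splitting of the alternating sum; the local formula \eqref{eqn:Upsilon_local} is only afterwards read off from the structure of the pairing. You instead prove well-definedness and \eqref{eqn:Upsilon_local} in one stroke by the explicit computation with \eqref{eqn:coordinates_iota} and \eqref{eqn:pairingOne}, and your combinatorics is right: the coefficient of $\xi_i^{(k-m-n)}y^{i,(m,n)}$ is $\sum_j(-1)^j\binom kj\binom jm\binom{k-j}{n}=(-1)^m\binom km\binom{k-m}{n}\sum_l(-1)^l\binom{k-m-n}{l}$, which vanishes unless $m+n=k$ (note that for $m+n<k$ the partner of $y^{i,(m,n)}$ is $\xi_i^{(k-m-n)}$, not $\xi_i^{(0)}$ --- a slip of wording only). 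You then check \eqref{eqn:Upsilon_k} in coordinates; there the identity actually used is Pascal's rule $\binom{k-1}{\alpha}+\binom{k-1}{\alpha-1}=\binom k\alpha$ rather than a vanishing alternating sum, again a harmless misattribution. The trade-off: the paper's inductive argument avoids combinatorics and produces the functoriality of $\Upsilon$ as a by-product (useful later, e.g.\ in parts (e)--(f) and in the spirit of Lemma \ref{lem:Upsilon_canonical}), while your computation is more elementary, makes the independence of $\jet k\xi$ completely transparent, and delivers (a) for free. Your treatments of (c), (d), (e), (f) coincide with the paper's: Pascal's rule for \eqref{eqn:momenta_inductive}, the shifted decomposition of $\Phi^{(0,k)}$ for \eqref{eqn:green}, and naturality of the projections, of $\PP_k$ and of the pairings under $\T$ for the last two items.

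One caveat: the parenthetical claim that \eqref{eqn:green} is a formal consequence of \eqref{eqn:momenta_inductive} and the $(k-1)$ case of (f) does not hold as stated. The flip $\wt\kappa_{k-1,k-1,E}$ does not carry $\Phi^{(k-1,k)}$ to $\Phi^{(k,k-1)}$ (in coordinates one obtains $y^{i,(\beta,\gamma+\delta)}$ versus $y^{i,(\delta+\beta,\gamma)}$, which are independent for a general semi-holonomic $\Phi$), and moreover the left-hand sides $\<\momenta_{k,\sigma}(\Phi),\jet k\xi>_{\TT k\sigma}$ and $\<\Phi^{(0,k)},\jet k\xi>_{\Tt k\sigma}$ are different quantities. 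Since you rely on the direct derivation of (d), this aside should simply be dropped; it does not affect the correctness of the proof.
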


Lemma \ref{lem:Upsilon_canonical} is a natural continuation of Theorem\ref{thm:k_hol}, but we decided to keep it separated since its proof is rather technical (see Appendix).
\begin{lem}[Universality of $\Upsilon_{k,\sigma}$]\label{lem:Upsilon_canonical} Let $M$ be a connected manifold. Then any functorial vector bundle morphism
 $(F_E, \und{F_E})$
\begin{equation}\label{e:FE}
\xymatrix{
\Thol{k} E \ar[d]_{\Thol k\sigma}\ar[rr]^{F_E}  && E \ar[d]_{\sigma} \\
\TT{2k} M \ar[rr]^{\und{F_E}}  && M
}
\end{equation}
is a linear combination of the morphisms
$\Upsilon_{l, \sigma}\circ \tau_{(l),E}^{(k)}$, where $\tau_{(l),E}^{(k)}: \Thol{k} E \to \Thol{l} E$ is the canonical projection induced by $\tau^{(k,k)}_{(l,l), E}: \TT {k,k} E \to \TT {l,l} E$ for $0\leq l\leq k$.
\end{lem}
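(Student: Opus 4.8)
The strategy is to reduce the statement to a finite-dimensional linear-algebra problem by exploiting functoriality and the homogeneity structure of the bundles involved. First I would recall that $\Thol{k}E$ is a graded bundle (in the sense of Grabowski--Rotkiewicz), and that a functorial morphism $F_E$ is in particular natural with respect to the action of the multiplicative monoid of reals coming from homotheties on the fibers of $\sigma$ and from the internal homogeneity structure of $\TT{k,k}$. Combining these with naturality with respect to morphisms of vector bundles over a point (i.e.\ linear maps between fibers), one gets that $F_E$ is \emph{linear} in the fiber direction — so it suffices to treat $\sigma$ the trivial line bundle $\R\times M\to M$ — and that, over a fixed point $x\in M$, $F$ is determined by a map $\Thol{k}(\R_x)\to\R$ which is linear and invariant under the relevant $\R$-actions. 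Here connectedness of $M$ enters: functoriality over the (connected) base lets one transport the local datum at one point to all of $M$, so it is enough to pin down $F$ at a single point.

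The second step is to identify the "universal" model. Naturality with respect to all smooth maps $M\to M'$ (equivalently, with respect to $\phi:\sigma\to\sigma'$) forces $F$ to be built only out of the canonical coordinates $y^{i,(\beta,\gamma)}$ with $\beta+\gamma$ ranging over $0,\dots,2k$, and linearity in $y$ plus the homogeneity constraints cut this down: for each total weight $l$ (that is, $\beta+\gamma=l$, $0\le l\le k$ after projecting via $\tau^{(k)}_{(l),E}$) the restriction of $F$ to the fiber of weight $l$ must be an $\R$-linear combination $\sum_{\beta+\gamma=l} c^{(l)}_{\beta,\gamma}\, y^{i,(\beta,\gamma)}$. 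The key point is then that the further naturality constraints — in particular invariance under the canonical flip $\kappa$ and the requirement that $F$ descend consistently under the projections $\tau^{(k,k)}_{(l,l),E}$, together with the symmetry $y^{i,(\beta,\gamma)}\leftrightarrow y^{i,(\gamma,\beta)}$ available on the semi-holonomic (holonomic-base) subbundle — force the coefficients $c^{(l)}_{\beta,\gamma}$, for each fixed $l$, to be a scalar multiple of the binomial pattern $(-1)^\beta\binom{l}{\beta}$ appearing in \eqref{eqn:Upsilon_local}. Hence $F|_{\text{weight }l}$ is proportional to $\Upsilon_{l,\sigma}\circ\tau^{(k)}_{(l),E}$, and summing over $l$ gives the claimed linear combination.

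Concretely, the order of steps I would carry out is: (i) use homogeneity + fiberwise naturality to reduce to $\sigma=\R\times M$ and to $F$ linear in the fiber coordinate; (ii) use naturality under constant-rank maps and the local triviality of all functors involved to reduce to a statement at one point $x_0$, invoking connectedness of $M$ to propagate; (iii) use naturality under arbitrary smooth maps to conclude $F$ is a $\R$-linear combination of the weighted coordinates $y^{i,(\beta,\gamma)}$, with coefficients constant (no $x$-dependence, again by connectedness and by naturality under translations/diffeomorphisms); (iv) split by total weight $l$ and, on each weight-$l$ piece, use the flip-invariance and the projection-compatibility to show the coefficient vector is one-dimensional and spanned by the binomial vector of $\Upsilon_{l}$; (v) assemble. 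The main obstacle is step (iv): verifying that the naturality axioms genuinely force the binomial coefficients and leave only one free scalar per weight, rather than a larger space of invariants. This amounts to showing that the space of functorial linear maps $\Thol{l}E\to\sigma$ covering $\tau^{2l}$ is exactly one-dimensional; I would do this by testing $F$ against a carefully chosen family of curves $\gamma(t,s)$ (e.g.\ polynomial homotopies) and reading off the resulting linear relations among the $c^{(l)}_{\beta,\gamma}$ — essentially re-deriving the integration-by-parts identity of Theorem \ref{thm:k_hol} and showing it is the only relation. This is exactly the technical computation the authors defer to the Appendix.
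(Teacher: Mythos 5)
Your reductions (i)--(iii) do match the easy part of the paper's argument: the base map is forced to be $\tau^{2k}$ by the Weil-functor description of natural transformations, and linearity plus naturality under maps of the form $f_0\times\id_V$ reduces everything to a single constant linear functional on $\TT k\TT k V\simeq \Weil^{k,k}\otimes V$, i.e.\ to constants $c_{\alpha\beta}$ multiplying the coordinates $y^{i,(\alpha,\beta)}$. But the entire content of the lemma is your step (iv) --- why, within each total weight $l$, the only admissible coefficient vector is the alternating binomial one, and why $c_{\alpha\beta}=0$ for $\alpha+\beta>k$ --- and there your proposal has a genuine gap. The constraints you invoke are not available: functoriality is naturality with respect to vector bundle morphisms only, so invariance under the canonical flip is not a hypothesis (and it would in fact be violated by $\Upsilon_{l,\sigma}$ for odd $l$, since the flip sends the pattern $(-1)^\alpha\binom l\alpha y^{(\alpha,\beta)}$ to $(-1)^l$ times itself); the symmetry $y^{i,(\beta,\gamma)}\leftrightarrow y^{i,(\gamma,\beta)}$ does \emph{not} hold on $\Thol{k}E$ (only the base coordinates are holonomic there); and there is no a priori requirement that $F$ ``descend under the projections'' --- that is part of the conclusion. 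Moreover, ``testing $F$ against polynomial homotopies'' cannot by itself constrain the $c_{\alpha\beta}$: evaluating a linear functional on chosen elements yields no relations; relations come only from a commuting naturality square. Finally, your auxiliary claim that the space of functorial morphisms $\Thol{l}E\to E$ covering $\tau^{2l}$ is one-dimensional is false as stated (it is $(l+1)$-dimensional, spanned by $\Upsilon_{j,\sigma}$ composed with projections, $0\le j\le l$); only its weight-$l$ homogeneous part is one-dimensional, which is precisely what needs proof.

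The missing idea, which is what the paper's Appendix actually does, is to test naturality against vector bundle morphisms whose fiber part varies over the base: $\wt\phi:M\times\R\to M\times\R$, $(x,y)\mapsto(x,\phi(x)\cdot y)$ with $\phi$ non-constant (it suffices to take $M=\R$). Computing $\TT k\TT k\wt\phi$ in adapted coordinates and then restricting to the semi-holonomic subbundle --- where $x^{(\alpha,\beta)}$ collapses to $x^{(\alpha+\beta)}$, so that terms which are distinct on $\TT{k,k}E$ coalesce --- one compares the coefficients of $\phi'(x)$, separated by the $\Z^3$-grading, in the identity $\sum c_{\alpha\beta}\und y^{(\alpha,\beta)}=\phi(x)\sum c_{\alpha\beta}y^{(\alpha,\beta)}$. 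This produces the relations $\sum_{\alpha+\beta=s}c_{\alpha\beta}\bigl(\alpha\,y^{(\alpha-1,\beta)}+\beta\,y^{(\alpha,\beta-1)}\bigr)=0$, hence the recurrence $(\alpha+1)c_{(\alpha+1)(\beta-1)}+\beta c_{\alpha\beta}=0$ together with $c_{\alpha k}=0$ for $\alpha\ge 1$, which forces proportionality to $(-1)^\alpha\binom s\alpha$ for each $s\le k$ and vanishing for $s>k$. Without identifying this class of test morphisms (or an equivalent constraint), the space of candidates remains $(k+1)^2$-dimensional and the lemma is not proved; so as it stands your plan defers exactly the step that constitutes the proof.
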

\begin{proof} [Proof of Theorem~\ref{thm:k_hol}] We shall prove first that $\Upsilon_{k, \sigma}$ is a well-defined mapping and, simultaneously part \eqref{cond:Upsilon_inductive}. To this end, we proceed by induction with respect to $k$ for arbitrary $\sigma:E\ra M$.

Consider $k=1$. Take $\Phi=\Phi^{(1,1)}\in\T\T E$ and let $v^2:=\T\T\sigma(\Phi)\in\TT 2 M$. $\T\T \sigma$ has the structure of a double vector bundle with the projections $\T\tau_E$ and $\tau_{\T E}$ onto $\T E$. Vectors $\Phi^{(0,1)}=\tau_{\T E}(\Phi)$ and $\Phi^{(1,0)}=\T\tau_E(\Phi)$ project to the same vector $v^1=\tau_{\T M}(v^2)=\T\tau(v^2)$ in $\T M$ and to the same point $\tau_E(\Phi^{(0,1)})=\tau_E(\Phi^{(1,0)})=\Phi^{(0,0)}$ in $E$. It follows that their difference with respect to the vector bundle structure $\T\sigma:\T E\ra\T M$ belongs to $\T E\big|_M\cong E\times_M\T M$. Hence, from the properties of the pairing $\<\cdot,\cdot>_{\T\sigma}$,
$$\<\Phi^{(0,1)}-\Phi^{(1,0)},\jet 1\xi>_{\T\sigma}=\<\nu_E\(\Phi^{(0,1)}-\Phi^{(1,0)}),\xi>_\sigma.$$
In other words $\Upsilon_{1,\sigma}$ is well-defined and satisfies
\eqref{eqn:Upsilon_1}.

Assume now that the assertion holds for every $l\leq k-1$ and every vector bundle $\sigma$. Observe that, for every $l$,
$$\Thol{l} E\ni\Phi\longmapsto \sum_{i=0}^l(-1)^i\binom l i\Phi^{(i,l-i)}\in\Tt lE$$
is a functorial vector bundle morphism over $\tauM{2l}l:\TT {2l} M\lra\TT l M\subset\Tt lM$ being the combination, with constant coefficients, of functorial vector bundle morphisms $\Phi\mapsto\Phi^{(i,l-i)}$. In other words, given a vector bundle $\sigma':E'\ra M'$ and a vector bundle morphism $\alpha:\sigma\ra\sigma'$, it holds
$$\Tt l\alpha\left(\sum_{i=0}^l(-1)^i\binom l i\Phi^{(i, l-i)}\right)=
\sum_{i=0}^l(-1)^i\binom l i\left((\TT l\TT l\alpha)\Phi\right)^{(i,l-i)}.$$
This fact, combined with functoriality of $\<\cdot, \cdot>_\sigma$ and the inductive assumption, guarantees that
\begin{equation}\label{eqn:functoriality_Upsilon}
\alpha(\Upsilon_{l,\sigma}(\Phi))=\Upsilon_{l,\sigma'}((\TT l\TT l\alpha)\Phi)
\end{equation}
for every $l\leq k-1$.
In particular, we may take as $\alpha$ in \eqref{eqn:functoriality_Upsilon}, the following vector bundle morphisms $\T \tau_E:\T\T E\ra\T E$ over $\T\tau_M$, $\Psi\mapsto \Psi^{(1,0)}$  and $\tau_{\T E}:\T\T E\ra\T E$ over $\tau_{\T M}$, $\Psi\mapsto \Psi^{(0,1)}$ to get
\begin{align}
&\Big(\Upsilon_{k-1,\T\T \sigma}(\Phi)\Big)^{(1,0)}=\Upsilon_{k-1,\T \sigma}\left(\Phi^{(k,k-1)}\right) \label{eqn:01},\\
&\Big(\Upsilon_{k-1,\T\T \sigma}(\Phi)\Big)^{(0,1)}=\Upsilon_{k-1,\T \sigma}\left(\Phi^{(k-1,k)}\right) \label{eqn:10},
\end{align}
where we treat $\Phi^{(k,k)}$ as an element of $\TT {k-1, k-1}\TT {1,1} E$ while $\Phi^{(k,k-1)}$ and $\Phi^{(k-1,k)}$ as elements of $\TT {k-1, k-1}\T E$.
Now for the pairing \eqref{eqn:Upsilon} of our interest we can write:
\begin{align*}
&\<\Phi^{(0,k)}-\binom k 1\Phi^{(1,k-1)}+\binom k 2\Phi^{(2, k-2)}+\hdots+(-1)^k\Phi^{(k,0)}, \jet k\xi>_{\Tt k\sigma}= \\
&\qquad\phantom{-.}\<\Phi^{(0,k)}-\binom {k-1} 1\Phi^{(1,k-1)}+\binom {k-1} 2\Phi^{(2, k-2)}+\hdots+(-1)^{k-1}\Phi^{(k-1,1)}, \jet k\xi>_{\Tt k\sigma}+\\
&\qquad-\<\Phi^{(1,k-1)}-\binom {k-1} 1\Phi^{(2,k-2)}+\binom {k-1} 2\Phi^{(3,k-3)}+\hdots+(-1)^k\Phi^{(k,0)}, \jet k\xi>_{\Tt k\sigma}
\end{align*}
Thanks to our inductive assumption the right-hand side later equals
\begin{align*}
\<\Upsilon_{k-1,\T \sigma}\(\Phi^{(k-1,k)}),\jet 1\xi>_{\T\sigma}-\<\Upsilon_{k-1,\T \sigma}\(\Phi^{(k,k-1)}),\jet 1\xi>_{\T\sigma},
\end{align*}
 Now we can use equalities \eqref{eqn:01} and \eqref{eqn:10} to bring the above expression to the form
\begin{align*}
\<\Big(\Upsilon_{k-1,\T\T \sigma}(\Phi)\Big)^{(0,1)}-\Big(\Upsilon_{k-1,\T\T \sigma}(\Phi)\Big)^{(1,0)},\jet 1\xi>_{\T\sigma}\overset{\eqref{eqn:Upsilon_1}}=\big<\Upsilon_{1,\sigma}\big(\Upsilon_{k-1,\T\T\sigma}(\Phi)\big),\xi\big>_\sigma.
\end{align*}
This sums up to formula \eqref{eqn:Upsilon_k}
and assures that $\Upsilon_{k, \sigma}$ is indeed correctly defined.

The proof of \eqref{cond:Upsilon_local} is simple.
Locally, $\Phi$ and $\jet k \xi$ (in canonical induced graded coordinates on $\Thol{k}E$ and $\TT k\E\ast$) are given by
$$\Phi\sim\left(x^{a,(\alpha)},y^{j,(\beta,\beta' )}\right),\quad\text{and}\quad \jet k\xi\sim \left(x^{a,(\alpha)}, \xi_i^{(\beta)}\right);$$
where $\alpha=0,1,\hdots,2k$ and $\beta,\beta'=0,1,\hdots,k$.

We have shown that $\<\wt{\Phi}, \jet k \xi>_{\Tt k \sigma}$, which has a coordinate form of a polynomial in $\xi_i^{(\beta)}$ and $y^{j, (\beta', \beta'')}$, actually does not depend on $\xi_i^{(\beta)}$ for $\beta\geq 1$. Hence from \eqref{eqn:Upsilon} we get that $\Upsilon_{k,\sigma}(\Phi)=(x^a,\overline y^i)$, where $\overline y^i$ is a $\xi_i=\xi_i^{(0)}$-coefficient in the coordinate expression for $\<\wt\Phi,\jet k\xi>_{\Tt k\sigma}$. By \eqref{eqn:pairingOne}
we know that $\overline y^i$ is the coordinate of total degree $k$ in $\wt\Phi$. We conclude that the formula \eqref{eqn:Upsilon_local} for $\Upsilon_{k, \sigma}$ is true.

To prove \eqref{cond:momenta_inductive} we use the standard decomposition of binomial coefficients $\binom{k+1}{i+1}=\binom ki+\binom k{i+1}$ to obtain
\begin{align*}
&\binom{k+1}1\Phi^{(0,k)}-\binom{k+1}2\Phi^{(1,k-1)}+\hdots+(-1)^{k}\binom{k+1}{k+1}\Phi^{(k,0)}=\\
&\phantom{XXX}\left[\binom{k}0\Phi^{(0,k)}-\binom{k}1\Phi^{(1,k-1)}+\hdots+(-1)^{k}\binom{k}{k}\Phi^{(k,0)}\right]+\\
&\phantom{XXX}\left[\binom{k}1\Phi^{(0,1+k-1)}-\binom{k}2\Phi^{(1,1+k-2)}+\hdots+(-1)^{k-1}\binom{k}{k}\Phi^{(k-1,1+0)}\right].
\end{align*}
In the first expression on the right-hand side of this equality we easily recognize formula \eqref{eqn:Upsilon}, whereas, in the second, formula $\eqref{eqn:moment_map}_{k-1}$.

The proof of \eqref{cond:int_by-parts} is very similar. We decompose
\begin{align*}
\Phi^{(0,k)}=&\left[\binom{k}0\Phi^{(0,k)}-\binom{k}1\Phi^{(1,k-1)}+\hdots+(-1)^{k}\binom{k}{k}\Phi^{(k,0)}\right]+\\
&\left[\binom{k}1\Phi^{(1+0,k-1)}-\binom{k}2\Phi^{(1+1,k-2)}+\hdots+(-1)^{k}\binom{k}{k}\Phi^{(1+k-1,0)}\right].
\end{align*}
Now on the right-hand side of this equality we easily recognize formulas \eqref{eqn:Upsilon} and $\eqref{eqn:moment_map}_{k-1}$.

Finally, to prove \eqref{cond:T_Upsilon} and \eqref{cond:T_momenta}, observe that the tangent functor $\T$ commutes (up to canonical isomorphisms) with the projections
 $\tau^{k, k}_{i, k-i}$ and $\PP_k$. Thus it commutes (up to canonical isomorphisms) with $\Upsilon_{k,\cdot}$ and $\momenta_{k,\cdot}$ which are linear combinations of the compositions of the mentioned projections.
\end{proof}

\begin{rem}
Note that in formula \eqref{eqn:Upsilon_local} only the coordinates of the highest degree ($k$) matter but the coordinates of lower degrees may be non-trivial. For example, if $k=2$, we have $\wt\Phi=\Phi^{(0,2)}-2\Phi^{(1,1)}+\Phi^{(2,0)}\in\T\T E$ and $\Phi^{(0,2)}\in\TT 2E$, interpreted as an element of $\T\T E$ by means of $\iota^{1,1}:\TT 2E\ra\T\T E$, equals to $\left(x^{a,(0)},x^{a,(1)},x^{a,(1)},x^{a,(2)}, y^{i,(0,0)}, y^{i,(0,1)}, y^{i,(0,1)}, y^{i,(0,2)}\right)$. Similarly for $\Phi^{(2,0)}$. Therefore, in coordinates, $\wt\Phi$ looks like
$$\wt\Phi\sim\left(x^{a,(0)},x^{a,(1)},x^{a,(2)},\underline y^{i,(0,0)},\underline y^{i,(1,0)},\underline y^{i,(0,1)},\underline y^{i,(1,1)}\right),$$
where $\underline y^{i,(0,0)}=0$, $\underline y^{i,(1,0)}=-\underline y^{i,(0,1)}=y^{i,(1,0)}-y^{i,(0,1)}$ and $\underline y^{i,(1,1)}= y^{i,(2,0)}-2y^{i,(1,1)}+y^{i,(0,2)}$.
\end{rem}

\begin{rem}\label{rem:Upsilon_inductive}
Observe that, for any $k$ and $l$, the bundle $\Thol{l}\Thol{k}E$ is the pullback of $\TT {l,l}\TT {k,k}E$ with respect to the canonical inclusion $\TT {l,l}\iota^{k,k}\circ\iota^{l,l}_{\TT{2k}M}:\TT {2l, 2k} M\subset\TT {l,l} \TT {k, k} M$, i.e.,
\begin{align*}
\Thol{l}\left(\Thol{k}\sigma\right)=\Thol{l}\left(\left(\iota^{k,k}\right)^\ast\TT {k,k}\sigma\right)=\left(\iota^{l,l}_{\TT{2k}M}\right)^\ast\TT {l,l}\left(\left(\iota^{k,k}\right)^\ast\TT {k,k}\sigma\right)=\\
\left(\iota^{l,l}_{\TT{2k}M}\right)^\ast\left(\TT {l,l}\iota^{k,k}\right)^\ast\left(\TT {l,l}\TT {k,k}\sigma\right)=\left(\TT {l,l}\iota^{k,k}\circ\iota^{l,l}_{\TT{2k}M}\right)^\ast\left(\TT {l,l}\TT {k,k}\sigma\right).
\end{align*}
Consider now the inclusion $\TT{k+l, k+l}E\subset\TT {l,l}\TT {k,k}E$. Any semi-holonomic vector $X\in \Thol{k+l}E$ lying over $v^{2k+2l}\in\TT{2k+2l} M\subset\TT{k+l, k+l}M$ is mapped via this inclusion to an element lying over an element in $\TT{2l}\TT{2k}M\subset\TT {l, l}\TT {k,k}M$.  Thus we have the canonical inclusion $\Thol{k+l}E\subset \Thol{l}\Thol{k} E$. Therefore the inductive formula \eqref{eqn:Upsilon_k} can be described as follows:
$$\xymatrix{\Thol{k}E \ar@{^{(}->}[rr] \ar[d]^{\Upsilon_{k,\sigma}} &&\Thol{k-1}\Thol{1} E \ar[d]^{\Upsilon_{k-1,\Thol{1}\sigma}}\\
E && \Thol{1}E. \ar[ll]_{\Upsilon_{1,\sigma}}}$$
Expressing $\Upsilon_{k,\sigma}$ as a composition of morphisms $\Upsilon_{1,\cdot}$ according to \eqref{eqn:Upsilon_k} we can obtain the more general formula $\Upsilon_{k+l,\sigma}=\Upsilon_{k,\sigma}\circ\Upsilon_{l,\Tsemihol{k,k}\sigma}\big|_{\Tsemihol{k+l,k+l}E}$, i.e.,
\begin{equation}\label{eqn:Upsilon_inductive_general}
\xymatrix{\Thol{k+l}E \ar@{^{(}->}[rr] \ar[d]^{\Upsilon_{k+l,\sigma}} &&\Thol{l}\Thol{k} E \ar[d]^{\Upsilon_{l,\Thol{k}\sigma}}\\
E && \Thol{k}E. \ar[ll]_{\Upsilon_{k,\sigma}}}\end{equation}

Similarly, formula \eqref{eqn:momenta_inductive} generalizes to
\begin{equation}\label{eqn:momenta_inductive_general}
\begin{split}
&\<\momenta_{k+l,\sigma}\left(\Phi^{(k+l,k+l)}\right),\jet{k+l}\xi>_{\TT{k+l}\sigma}=\\
&\<\momenta_{k,\sigma}\left(\Upsilon_{l,\Tsemihol{k,k}\sigma}\left(\Phi^{(k+l,k+l)}\right)\right),\jet k\xi>_{\TT k\sigma}+
\< \momenta_{l-1,\TT{k+1}\sigma}\left(\Phi^{(l-1,l+k)}\right),\jet{k+l}\xi>_{\TT {k+l} \sigma}.\end{split}\end{equation}
The proof is left to the reader.

In light of \eqref{eqn:green}, we can interpret formulas \eqref{eqn:Upsilon_inductive_general} and \eqref{eqn:momenta_inductive_general} as follows: integration by parts $k+l$ times can be obtained as the composition of $k$ times and $l$ times integration by parts.
\end{rem}


\newpage
\section{Applications to variational calculus}\label{sec:EL}
In this section we use  Theorem~\ref{thm:k_hol} to give a geometric construction of the force and momentum in higher-order variational calculus (Theorem \ref{thm:var_calc}). As a consequence, in Theorem \ref{thm:EL}, we obtain necessary and sufficient conditions (Euler-Lagrange equations and transversality conditions) for a curve to be an extremal of the higher-order variational Problem \ref{prob:var_M}.

\paragraph{Formulation of the problem.}
Consider a \emph{Lagrangian function} $L:\TT k M\ra \R$ and the associated action
\begin{equation}\label{eqn:action_functional}
\gamma\longmapsto S_L(\jet k\gamma)=\int_{t_0}^{t_1}L(\jet k\gamma(t))\dd t,
\end{equation}
where $\gamma:[t_0,t_1]\ra M$ is a path and $\jet k\gamma(t)\in \TT k_{\gamma(t)} M$ its $\thh{k}$ prolongation. The set of  \emph{admissible paths} $\jet k\gamma$ will be denoted by $\Adm([t_0,t_1],\TT k M)$.

By an \emph{admissible variation} of an admissible path $\jet k\gamma(t)$ we will understand a curve $\del\jet k\gamma(t)\in\T_{\jet k\gamma(t)}\TT k M$. Observe that every admissible variation $\del\jet k\gamma(t)$ can be obtained from a homotopy $\chi(\cdot,\cdot):[t_0,t_1]\times(-\eps,\eps)\ra M$ such that $\chi(t,0)=\gamma(t)$ and $\del \jet k\gamma(t)=\jet 1_{s=0}\jet k\chi(t,s)$. We see that $\del \jet k\gamma$ is \emph{generated} by $\del\gamma(t)=\jet 1_{s=0}\chi(t,s)\in\T_{\gamma(t)} M$, i.e., a vector field along $\gamma(t)$, in the sense that
\begin{equation}\label{eqn:adm_var}
\del\jet k\gamma=\kappa_{k}\left(\jet k\del \gamma\right).
\end{equation}
So, $\del\gamma$ can be called a \emph{generator of the variation $\del\jet k\gamma$}.
Given an admissible variation $\del\jet k\gamma$ we may define the differential of the action $S_L$ in the direction of this variation:
$$\<\dd S_L(\jet k\gamma),\del\jet k\gamma>:=\int_{t_0}^{t_1}\<\dd L(\jet k\gamma(t)),\del\jet k\gamma(t)>_{\tau_{\TT k M}}\dd t.$$

Define now a natural projection
$$\Pp:\Adm([t_0,t_1],M)\ni\jet k\gamma\longmapsto \left(\jet {k-1}\gamma(t_0),\jet {k-1}\gamma(t_1)\right)\in \TT {k-1}M\times\TT {k-1}M,$$
which sends an admissible path $\jet k\gamma$ to the pair consisting of its initial and final $(k-1)$--velocity.  Its tangent map $\T\Pp$ sends a variation $\del\jet k\gamma\in\T_{\jet k\gamma}\TT k M$ to the pair $(\del\jet{k-1}\gamma(t_0),\del\jet{k-1}\gamma(t_1))\in\T\TT {k-1}M\times\T\TT{k-1} M$.

Now we are ready to formulate the following \emph{variational problem}.
\begin{problem}[Variational problem]\label{prob:var_M} For a given Lagrangian function $L:\TT kM\ra\R$ and a submanifold $S\subset \TT{k-1} M\times\TT{k-1} M$  which represents the admissible boundary values of $(k-1)$--velocities, find all curves $\gamma:[t_0,t_1]\ra M$ such that their $\thh{k}$ prolongation $\jet k\gamma$ satisfies
$$\<\dd S_L(\jet k\gamma),\del\jet k\gamma>=0\quad\text{for every admissible variation $\del\jet k\gamma$ such that $\T\Pp(\del \jet k\gamma)\in \T S$}.$$
\end{problem}
Let us comment the above formulation. In our approach to variational problems we study the behavior of the differential of the action functional in the directions of admissible variations (differential approach), rather to compare  the values of the action on nearby trajectories (integral approach). Hence, solutions of Problem \ref{prob:var_M} are only the critical, not extremal, points of the action functional \eqref{eqn:action_functional}. The philosophy of understanding a variational problem as the study of the differential of the action restricted to the sets of admissible trajectories and admissible variations allows one to treat the unconstrained and constrained cases in a unified way (see, e.g., \cite{KG_JG_var_calc_alg_2008, Gracia_Martin_Munos_2003, MJ_WR_nh_vac_2013}).

\paragraph{Higher-order variational calculus.}

Let $S_L$ be the action functional \eqref{eqn:action_functional} and $\del\jet k\gamma$ the variation \eqref{eqn:adm_var}. Define the \emph{force} $\F_{L,\gamma}(t)\in \TT\ast M$ and the \emph{momentum} $\M_{L,\gamma}(t)\in \TT{k-1}\TT\ast M$ along $\gamma(t)$ by
\begin{align}\label{eqn:force}
&\F_{L,\gamma}(t)=\Upsilon_{k,\tau^\ast}\left(\jet k\Lambda_L(\jet k\gamma(t))\right),\\\label{eqn:momentum}
&\M_{L,\gamma}(t)=\momenta_{k-1,\tau^\ast}\left(\jet{k-1}\lambda_L(\jet k\gamma(t))\right),
\end{align}
 where $\Lambda_L:=\veps_{k}\circ\dd L:\TT kM\ra\TT k\TT\ast M $ and $\lambda_L:=\tauM k{k-1,\TT\ast M}\circ \Lambda_L:\TT kM\ra\TT{k-1}\TT\ast M$.
\begin{thm}\label{thm:var_calc} The differential of the action $S_L$ in the direction of the variation $\del\jet k\gamma$ equals
\begin{equation}\label{eqn:action_var_full}
\<\dd S_L(\jet k\gamma),\del\jet k\gamma>=
\int_{t_0}^{t_1}\<\F_{L,\gamma}(t),\del\gamma(t)>_{\tau}\dd t+\<\M_{L,\gamma}(t),\jet {k-1}\del \gamma(t)>_{\TT {k-1} \tau}\Bigg|_{t_0}^{t_1}.
\end{equation}
\end{thm}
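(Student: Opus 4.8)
The strategy is to reduce the statement, step by step, to the ``bundle-theoretic integration by parts formula'' \eqref{eqn:green} of Theorem~\ref{thm:k_hol}, applied to the cotangent fibration $\tau^\ast:\TT\ast M\ra M$. The starting point is the definition
$$\<\dd S_L(\jet k\gamma),\del\jet k\gamma>=\int_{t_0}^{t_1}\<\dd L(\jet k\gamma(t)),\del\jet k\gamma(t)>_{\tau_{\TT kM}}\dd t,$$
into which I substitute the ``reversal of differentiation'' relation \eqref{eqn:adm_var}, $\del\jet k\gamma=\kappa_{k}(\jet k\del\gamma)$. The defining property \eqref{eqn:kappa_eps} of the dual map $\veps_{k}$ then converts the integrand:
$$\<\dd L(\jet k\gamma(t)),\kappa_{k}(\jet k\del\gamma(t))>_{\tau_{\TT kM}}=\<\veps_{k}(\dd L(\jet k\gamma(t))),\jet k\del\gamma(t)>_{\TT k\tau}=\<\Lambda_L(\jet k\gamma(t)),\jet k\del\gamma(t)>_{\TT k\tau^\ast}.$$
Here one must check that $\dd L(\jet k\gamma(t))\in\TT\ast\TT kM$ is indeed a covector against which $\kappa_{k}(\jet k\del\gamma(t))\in\T\TT kM$ pairs — this is immediate since both sit over the same point $\jet k\gamma(t)$ — and that $\Lambda_L=\veps_k\circ\dd L$ lands in $\TT k\TT\ast M$, which is exactly how $\Lambda_L$ was defined.

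The heart of the argument is then the observation that $\jet k\del\gamma(t)$ is a \emph{holonomic}, hence semi-holonomic, vector: writing $\del\gamma$ as a vector field along $\gamma$ coming from a homotopy $\chi$, the object $\jet k\del\gamma(t)=\jet k_t\jet 1_{s=0}\chi(t,s)$ lies in $\TT k\T M$ and, more to the point, the pair $(\Lambda_L(\jet k\gamma(t)),\jet k\del\gamma(t))$ should be recognized as (the relevant projection of) a semi-holonomic vector in $\Thol{k}\TT\ast M$ obtained by applying $\jet k_t$ to the curve $t\mapsto\Lambda_L(\jet k\gamma(t))$ in $\TT k\TT\ast M$ — call it $\Phi^{(k,k)}(t):=\jet k_t\Lambda_L(\jet k\gamma(t))\in\Thol{k}\TT\ast M$, which is precisely the argument of $\Upsilon_{k,\tau^\ast}$ in \eqref{eqn:force}. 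Under the projections $\tauE{(k,k)}{(m,n)}$ this $\Phi^{(k,k)}(t)$ produces the lower velocities $\Phi^{(m,n)}(t)$; in particular $\Phi^{(0,k)}(t)$ is $\jet k\del\gamma$-paired against $\Lambda_L$, modulo making the bookkeeping of which tangent directions carry the $\jet k_t$-differentiation and which carry the homotopy parameter $s$ precise. Once this identification is in place, the integrand is exactly the left-hand side $\<\Phi^{(0,k)},\jet k\xi>_{\Tt k\sigma}$ of \eqref{eqn:green} with $\sigma=\tau^\ast$ and $\jet k\xi=\jet k\del\gamma(t)$, and \eqref{eqn:green} rewrites it as
$$\<\Phi^{(0,k)}(t),\jet k\del\gamma(t)>=\<\Upsilon_{k,\tau^\ast}(\Phi^{(k,k)}(t)),\del\gamma(t)>_{\tau}+\<\T\momenta_{k-1,\tau^\ast}(\Phi^{(k,k-1)}(t)),\jet k\del\gamma(t)>_{\Tt k\tau^\ast}.$$
The first term is, by \eqref{eqn:force}, exactly $\<\F_{L,\gamma}(t),\del\gamma(t)>_{\tau}$, giving the integral term of \eqref{eqn:action_var_full}.

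The remaining, and I expect the fiddliest, step is to recognize the second term as a total time derivative and integrate it. One wants
$$\int_{t_0}^{t_1}\<\T\momenta_{k-1,\tau^\ast}(\Phi^{(k,k-1)}(t)),\jet k\del\gamma(t)>_{\Tt k\tau^\ast}\dd t=\<\M_{L,\gamma}(t),\jet{k-1}\del\gamma(t)>_{\TT{k-1}\tau}\Big|_{t_0}^{t_1}.$$
This should follow because $\Phi^{(k,k-1)}(t)=\jet 1_t\Psi(t)$ where $\Psi(t)=\jet{k-1}_t\lambda_L(\jet k\gamma(t))=\jet{k-1}\lambda_L(\jet k\gamma(t))\in\TT{k-1}\TT\ast M$ is the argument of $\momenta_{k-1,\tau^\ast}$ in \eqref{eqn:momentum}, i.e.\ $\Phi^{(k,k-1)}(t)$ is the tangent prolongation (in $t$) of the momentum-curve $t\mapsto\M_{L,\gamma}(t)$, once one commutes $\momenta_{k-1,\cdot}$ past the tangent functor using part~\eqref{cond:T_momenta} of Theorem~\ref{thm:k_hol}; pairing such a $t$-derivative against $\jet k\del\gamma(t)$ (which likewise carries a $\jet 1_t$ on the outside) produces, by the Leibniz rule packaged in the tangent lift of the pairing $\<\cdot,\cdot>_{\TT{k-1}\tau}$, a sum of a ``genuine'' $t$-derivative — whose integral is the boundary term claimed — plus a term that vanishes because the $s$-direction of the homotopy is $t$-independent at the level of $\jet{k-1}\del\gamma$. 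Tracking the canonical isomorphisms $\wt\kappa_{k-1,k-1}$ and $\kappa_{k-1}$ through this, and checking that the pairing $\<\M_{L,\gamma},\jet{k-1}\del\gamma>_{\TT{k-1}\tau}$ is the one obtained by the lift, is the main obstacle; everything else is either a direct appeal to Theorem~\ref{thm:k_hol} or the definitions \eqref{eqn:force}--\eqref{eqn:momentum}. A cleaner alternative for this last step, which I would try first, is to prove it by induction on $k$, peeling off one integration by parts at a time via \eqref{eqn:momenta_inductive} and the $k=1$ case (ordinary integration by parts in $\TT\ast M$), so that the commutation-with-$\T$ bookkeeping only ever has to be done in the one-step situation.
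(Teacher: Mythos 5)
Your proposal is correct and is essentially the paper's own proof: rewrite the integrand via $\kappa_k$/$\veps_k$ duality as $\<\Lambda_L(\jet k\gamma(t)),\jet k\del\gamma(t)>_{\TT k\tau}$, apply \eqref{eqn:green} with $\Phi^{(k,k)}=\jet k\Lambda_L(\jet k\gamma(t))$ (semi-holonomic because it projects to $\jet{2k}\gamma$), identify the first term with the force \eqref{eqn:force}, and recognize the second as a total time derivative whose integral gives the boundary term. The only remark is that your final step is simpler than you anticipate: there is no Leibniz splitting, no extra vanishing term, and no need for part \eqref{cond:T_momenta} — one just writes $\T\momenta_{k-1,\tau^\ast}\left(\jet 1\jet{k-1}\lambda_L(\jet k\gamma(t))\right)=\jet 1\left[\momenta_{k-1,\tau^\ast}\left(\jet{k-1}\lambda_L(\jet k\gamma(t))\right)\right]$ and uses that $\<\cdot,\cdot>_{\T\TT{k-1}\tau}$ is the tangent lift of $\<\cdot,\cdot>_{\TT{k-1}\tau}$, so its value on these two $1$-jets is exactly $\frac{\dd}{\dd t}\<\M_{L,\gamma}(t),\jet{k-1}\del\gamma(t)>_{\TT{k-1}\tau}$.
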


Theorem~\ref{thm:EL} below is  an immediate consequence of formula \eqref{eqn:action_var_full}.

\begin{thm}\label{thm:EL}
A curve $\gamma$ is a solution of Problem \ref{prob:var_M} if and only if it satisfies the following \emph{Euler-Lagrange (EL) equation}
\begin{align}\label{eqn:EL}
&\F_{L, \gamma}(t)=0
\intertext{and the \emph{transversality conditions}}\label{eqn:transv}
&\left(-\veps_{k-1}^{-1}\left(\M_{L,\gamma}(t_0)\right),\veps_{k-1}^{-1}\left(\M_{L,\gamma}(t_1)\right)\right)\in\TT\ast\TT{k-1}M\times \TT\ast\TT{k-1}M\quad \text{annihilates $\T S$}.
\end{align}
\end{thm}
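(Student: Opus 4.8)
The plan is to derive Theorem~\ref{thm:EL} as a direct corollary of Theorem~\ref{thm:var_calc}, reasoning separately about the interior (force) term and the boundary (momentum) term in the variational formula \eqref{eqn:action_var_full}. Throughout, I fix an admissible path $\jet k\gamma$ and recall that, by construction, admissible variations $\del\jet k\gamma$ are exactly those of the form $\kappa_k(\jet k\del\gamma)$ for $\del\gamma$ a vector field along $\gamma$; moreover the constraint $\T\Pp(\del\jet k\gamma)\in\T S$ translates — via the identifications $\del\jet{k-1}\gamma(t_i)=\jet 1_{s=0}\jet{k-1}\chi(t_i,s)$ and $\del\jet{k-1}\gamma(t_i)=\kappa_{k-1}(\jet{k-1}\del\gamma(t_i))$ — into a condition only on the ``jets of the generator at the endpoints'', namely $(\del\jet{k-1}\gamma(t_0),\del\jet{k-1}\gamma(t_1))\in\T S$.

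\textbf{Sufficiency.} Suppose $\gamma$ satisfies \eqref{eqn:EL} and \eqref{eqn:transv}. Then for every admissible variation with $\T\Pp(\del\jet k\gamma)\in\T S$, the integrand $\<\F_{L,\gamma}(t),\del\gamma(t)>_\tau$ vanishes identically because $\F_{L,\gamma}(t)=0$, so the integral term in \eqref{eqn:action_var_full} is zero. For the boundary term, I use the duality \eqref{eqn:kappa_eps} between $\kappa_{k-1}$ and $\veps_{k-1}$ to rewrite the pairing: since $\jet{k-1}\del\gamma(t_i)\in\TT{k-1}\T M$ and $\del\jet{k-1}\gamma(t_i)=\kappa_{k-1}(\jet{k-1}\del\gamma(t_i))$, one has
\begin{equation*}
\<\M_{L,\gamma}(t_i),\jet{k-1}\del\gamma(t_i)>_{\TT{k-1}\tau}=\<\veps_{k-1}^{-1}(\M_{L,\gamma}(t_i)),\del\jet{k-1}\gamma(t_i)>_{\tau_{\TT{k-1}M}}.
\end{equation*}
Hence the boundary term of \eqref{eqn:action_var_full} becomes exactly the pairing of $(-\veps_{k-1}^{-1}(\M_{L,\gamma}(t_0)),\veps_{k-1}^{-1}(\M_{L,\gamma}(t_1)))$, viewed as a covector on $\TT{k-1}M\times\TT{k-1}M$ along the base point $\Pp(\jet k\gamma)$, with the tangent vector $\T\Pp(\del\jet k\gamma)=(\del\jet{k-1}\gamma(t_0),\del\jet{k-1}\gamma(t_1))\in\T S$. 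By the transversality condition \eqref{eqn:transv} this covector annihilates $\T S$, so the boundary term vanishes too, giving $\<\dd S_L(\jet k\gamma),\del\jet k\gamma>=0$.

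\textbf{Necessity.} Conversely, assume $\gamma$ solves Problem~\ref{prob:var_M}. First restrict to variations supported in the interior: choose $\del\gamma$ with $\jet{k-1}\del\gamma$ vanishing near $t_0$ and $t_1$, so that $\T\Pp(\del\jet k\gamma)=0\in\T S$ trivially. Then \eqref{eqn:action_var_full} reduces to $\int_{t_0}^{t_1}\<\F_{L,\gamma}(t),\del\gamma(t)>_\tau\dd t=0$, and since $\del\gamma(t)$ ranges over an arbitrary compactly-supported-in-the-interior vector field along $\gamma$, the fundamental lemma of the calculus of variations forces $\F_{L,\gamma}(t)=0$ for all $t$, which is \eqref{eqn:EL}. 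With the interior term now known to vanish for \emph{all} admissible variations, \eqref{eqn:action_var_full} collapses to the boundary term, so for every admissible variation with $\T\Pp(\del\jet k\gamma)\in\T S$ we get $\<(-\veps_{k-1}^{-1}(\M_{L,\gamma}(t_0)),\veps_{k-1}^{-1}(\M_{L,\gamma}(t_1))),\T\Pp(\del\jet k\gamma)>=0$. The remaining point is that $\T\Pp$ is surjective onto $\T S\subset\T(\TT{k-1}M\times\TT{k-1}M)$ at the relevant point: any tangent vector to $S$ at $\Pp(\jet k\gamma)$ arises as $(\del\jet{k-1}\gamma(t_0),\del\jet{k-1}\gamma(t_1))$ for some homotopy $\chi$, e.g. by patching together two independently prescribed endpoint jets with a bump-function interpolation. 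This yields \eqref{eqn:transv}.

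\textbf{Main obstacle.} The conceptual steps — fundamental lemma, the $\kappa_{k-1}$--$\veps_{k-1}$ duality, annihilator bookkeeping — are routine. The step I expect to require the most care is the precise correspondence between admissible variations $\del\jet k\gamma$ (elements of $\T\TT kM$ along the path), their generators $\del\gamma$, and the endpoint data: one must check that $\del\jet{k-1}\gamma(t_i)$ depends only on the $(k-1)$--jet of $\del\gamma$ at $t_i$ (so that $\T\Pp$ is well-behaved), that it equals $\kappa_{k-1}(\jet{k-1}\del\gamma(t_i))$ as claimed, and that every tangent vector to $S$ is realized — i.e. the surjectivity of the endpoint-jet map used in the necessity direction. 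This is where one genuinely uses the freedom to build homotopies $\chi(t,s)$ with independently prescribed behavior at $t_0$ and $t_1$.
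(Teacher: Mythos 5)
Your proposal is correct and follows essentially the same route as the paper: Theorem~\ref{thm:EL} is obtained directly from formula \eqref{eqn:action_var_full} of Theorem~\ref{thm:var_calc}, using $\del\jet{k-1}\gamma(t_i)=\kappa_{k-1}\left(\jet{k-1}\del\gamma(t_i)\right)$ and the duality \eqref{eqn:kappa_eps} between $\kappa_{k-1}$ and $\veps_{k-1}$ to identify the boundary term with the pairing against $\T\Pp(\del\jet k\gamma)$. The paper leaves the remaining standard steps (fundamental lemma for the force term, realizability of arbitrary endpoint data in $\T S$ by suitable generators $\del\gamma$) implicit, and you merely spell these out; there is no substantive difference in approach.
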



\begin{proof}[Proof of Theorems \ref{thm:var_calc} and \ref{thm:EL}]
Let us calculate the variation of the action $S_L$ in the direction $\del\jet k\gamma$:
\begin{align*}
&\<\dd L(\jet k\gamma(t)),\del\jet k\gamma(t)>_{\tau_{\TT kM}}\overset{\eqref{eqn:adm_var}}=\<\dd L(\jet k\gamma(t)),\kappa_{k}\left(\jet k\del\gamma(t)\right)>_{\tau_{\TT kM}}\overset{\eqref{eqn:kappa_eps}}=\\
&\<\veps_{k}\circ\dd L(\jet k\gamma(t)),\jet k\del\gamma(t)>_{\TT k\tau}=\<\Lambda_L(\jet k\gamma(t)),\jet k\del \gamma(t)>_{\TT k\tau}.
\end{align*}

Now we can use formula \eqref{eqn:green} with $\Phi^{(k,k)}=\jet k \Lambda_L(\jet k\gamma(t))$, $\Phi^{(0,k)}=\Lambda_L(\jet k\gamma(t))$ and $\Phi^{(k,k-1)}=\jet k \lambda_L(\jet k\gamma(t))$, since the element $\Phi^{(k,k)}:=\jet k\Lambda_L(\jet k\gamma(t))\in\TT k\TT k\TT \ast M$ is a semi-holonomic vector as it projects to $\jet k\jet k\gamma=\jet{2k}\gamma\in\TT{2k}M$. We get
\begin{align*}
&\<\Lambda_L(\jet k\gamma(t)),\jet k\del \gamma(t)>_{\TT k\tau}\overset{\eqref{eqn:green}}{=}\\
&\<\Upsilon_{k,\tau^\ast}\left(\jet k\Lambda_L(\jet k\gamma(t))\right),\del\gamma(t)>_\tau+\<\T\momenta_{k-1,\tau^\ast}\left(\jet k\lambda_L(\jet k\gamma(t))\right),\jet k\del\gamma(t)>_{\Tt k\tau}.
\end{align*}
In the first summand we recognize the force $\F_{L,\gamma}(t)$ defined by \eqref{eqn:force}. To the second we can apply the equality
$$
\T\momenta_{k-1,\tau^\ast}\left(\jet k\lambda_L(\jet k\gamma(t))\right)=
\T\momenta_{k-1,\tau^\ast}\left(\jet 1 \jet {k-1}\lambda_L(\jet k\gamma(t))\right)=
\jet 1\left[\momenta_{k-1,\tau^\ast}\left(\jet {k-1}\lambda_L(\jet k\gamma(t))\right)\right].$$ We conclude that
\begin{align*}
&\<\Lambda_L(\jet k\gamma(t)),\jet k\del \gamma(t)>_{\TT k\tau}=\\
&\<\F_{L,\gamma}(t),\del\gamma(t)>_\tau+\<\jet 1\left[\momenta_{k-1,\tau^\ast}\left(\jet{k-1}\lambda_L(\jet k\gamma(t))\right)\right],\jet 1\jet {k-1}\del\gamma(t)>_{\T\TT {k-1}\tau}=
\\
&\<\F_{L,\gamma}(t),\del\gamma(t)>_\tau+\frac{\dd}{\dd t}\<\momenta_{k-1,\tau^\ast}\left(\jet{k-1}\lambda_L(\jet k\gamma(t))\right),\jet {k-1}\del\gamma(t)>_{\TT {k-1}\tau}=
\\
&\<\F_{L,\gamma}(t),\del\gamma(t)>_\tau+\frac{\dd}{\dd t}\<\M_{L,\gamma}(t),\jet {k-1}\del\gamma(t)>_{\TT{k-1}\tau},
\end{align*}
where the momentum $\M_{L,\gamma}(t)$ is defined by \eqref{eqn:momentum}. Thus the variation \eqref{eqn:action_var_full} reads
\begin{equation}\label{eqn:action_infinitesimal}
\<\dd L(\jet k\gamma(t)),\del\jet k\gamma(t)>_{\tau_{\TT kM}}=\<\F_{L,\gamma}(t),\del\gamma(t)>_\tau+\frac{\dd}{\dd t}\<\M_{L,\gamma}(t),\jet {k-1}\del\gamma(t)>_{\TT{k-1}\tau}.
\end{equation}
Integrating the above expression over $[t_0,t_1]$ we get \eqref{eqn:action_var_full}, concluding the proof of Theorem \ref{thm:var_calc}. Theorem \ref{thm:EL} follows easily, as $\del\jet{k-1}\gamma(t)=\kappa_{k-1}(\jet{k-1}\del\gamma(t))$ and $\veps_{k-1}$ is dual to $\kappa_{k-1}$, in light of equation \eqref{eqn:kappa_eps}.
\end{proof}


\begin{rem}\label{rem:EL_full}
The process of constructing  the EL equations \eqref{eqn:EL} starting from the Lagrangian function $L$ can be followed on the diagram
\begin{equation}\label{eqn:EL_diagram}
\xymatrix{
&&&&&& \ker\Upsilon_{k,\tau^\ast}\ar@{^{(}->}[d]\\
&&\TT\ast\TT kM \ar[rr]^{\eps_{k}}\ar[d]^{\left(\tau_{\TT k M}\right)^\ast} && \TT k\TT\ast M\ar[d]^{\TT k\tau^\ast}\ar@{-->}[rr]^{\jet k\Lambda_L(\jet k\gamma)}\ar@{-->}[rru]^{\exists ?}&&\Thol{k}\TT\ast M\ar[d]^{\Upsilon_{k,\tau^\ast}}\\
I\ar[rr]^{\jet k\gamma}&&\TT kM\ar@{=}[rr] \ar@/^0.5pc/@{..>}[u]^{\dd L}\ar@{..>}[rru]_{\Lambda_L} && \TT k M && \TT\ast M.
}\end{equation}

Similarly, the geometric construction of the momenta \eqref{eqn:momentum} corresponds to the diagram
\begin{equation}\label{eqn:momenta_diagram}
\xymatrix{
&&\TT\ast\TT kM \ar[rr]^{\eps_{k}}\ar[d]^{\left(\tau_{\TT kM }\right)^\ast} && \TT k\TT\ast M \ar[d]^{\tau^{k}_{k-1,\TT\ast M}}&&&&\\
I\ar[rr]^{\jet k\gamma}&&\TT kM\ar@{..>}[rr]^{\lambda_L} \ar@/^0.5pc/@{..>}[u]^{\dd L}\ar@{..>}[rru]^{\Lambda_L} && \TT{k-1}\TT\ast M\ar@{-->}[rr]^{\jet{k-1}\lambda_L(\jet k\gamma)} && \Thol{k-1}\TT\ast M\ar[rr]^{\momenta_{k-1,\tau^\ast}}&& \TT{k-1}\TT\ast M.
}\end{equation}
Above we used dotted arrows to denote the objects associated with the Lagrangian function,  whereas dashed arrows are maps defined only along the images of $\gamma(t)$.

Note also that the map $\Upsilon_{k,\tau^\ast}$ allows us to define \emph{higher-order EL equations with external forces}. Namely, given an \emph{external force}, i.e., a map $F:[t_0,t_1]\ra\TT\ast M$, we can consider equation
$$\Upsilon_{k,\tau^\ast}\left(\jet k\Lambda_L(\jet k\gamma(t))\right)=F(t).$$
When $F(t)=0$, the equation above reduces to the EL equation \eqref{eqn:EL}.
\end{rem}

\paragraph{Local form of the forces and momenta.}
We shall now derive the local form of the force \eqref{eqn:force} and momentum \eqref{eqn:momentum}.

Let us first calculate the force. Consider a trajectory $\gamma(t)\sim (x^a(t))$. The differential $\dd L(t^k\gamma(t)) \in \TT\ast\TT kM$ is
given by $p_{a, (\alpha)} = \frac{\partial L}{\partial x^{a,(\alpha)}}(t^k\gamma(t))$, hence using \eqref{eqn:eps_kM} we calculate the coordinates of
$\Lambda_L(\jet k\gamma(t)) \in \TT k\TT\ast M$, namely,
$$
p_a^{(\alpha)}(\Lambda_L(\jet k\gamma(t))) =  \binom{k}{\alpha}^{-1}\frac{\partial L}{\partial x^{a,(k-\alpha)}}(\jet k\gamma(t)), \quad x^{a, (\alpha)}(\Lambda_L(\jet k\gamma(t))) = \frac{\dd^\alpha x^a(t)}{\dd t^\alpha}.
$$
Therefore, $p_a^{(\alpha, \beta)}(\jet k\Lambda_L(\jet k\gamma(t))) = \binom{k}{\beta}^{-1}\frac{\dd^\alpha}{\dd t^\alpha}\frac{\partial L}{\partial x^{a,(k-\beta)}}$, and using
\eqref{eqn:Upsilon_local} we find that our formula \eqref{eqn:force} of the force takes the following  well-known form
\begin{equation}\label{eqn:force_local}
\F_{L,\gamma}(t)_a=\sum_{\alpha=0}^k (-1)^\alpha \frac{\dd^\alpha}{\dd t^\alpha}\left(\frac{\partial L}{\partial x^{a,(\alpha)}}(t^k\gamma(t))\right).
\end{equation}

 Concerning momentum, a direct (i.e., by means of formulas \eqref{eqn:moment_map} and \eqref{eqn:momenta_diagram}) derivation of its local form  is a quite complicated computational task, so that we will obtain it by using formula \eqref{eqn:action_infinitesimal}, instead.

To this end, consider  a generator of the variation $\del\gamma(t)\sim(x^a(t),\del x^a(t))$ and its $\thh{k}$ tangent lift $\jet k\gamma(t)\sim\left(x^{a,(\alpha)}(t),\del x^{a,(\alpha)}(t)\right)_{\alpha=0,\hdots,k}$, where $x^{a,(0)}(t)=x^a(t)$, $\del x^{a,(0)}(t)=\del x^a(t)$ and $x^{a,(\alpha+1)}(t)=\frac{\dd}{\dd t}x^{a,(\alpha)}(t)$, $\del x^{a,(\alpha+1)}(t)=\frac{\dd}{\dd t}\del x^{a,(\alpha)}(t)$, for $\alpha=0,\hdots,k-1$. Let the momentum $\M_{L,\gamma}(t)$ along the trajectory $\gamma(t)$ be locally given by $\left(x^{a,(\alpha)}(t),\binom {k-1}\alpha ^{-1} p_{a,(k-1-\alpha)}(t)\right)$. Thus, by $\eqref{eqn:eps_kM}_{k-1}$, $\eps^{-1}_{k-1}\left(\M_{L,\gamma}(t)\right)\sim\left(x^{a,(\alpha)}(t), p_{a,(\alpha)}(t)\right)\in\TT\ast\TT kM$. Locally
$$
\<M_{L,\gamma}(t), \jet{k-1}\del\gamma(t)>_{\TT{k-1}\tau}=\<\eps^{-1}_{k-1}\left(M_{L,\gamma}(t)\right), \del\jet{k-1}\gamma(t)>_{\tau_{\TT{k-1}M}}=\sum_a\sum_{\alpha=0}^{k-1} p_{a,(\alpha)}(t)\del x^{a,(\alpha)}(t).
$$
Thus
$$
\frac{\dd}{\dd t}\<M_{L,\gamma}(t), \jet{k-1}\del\gamma(t)>_{\TT{k-1}\tau}=\sum_a\sum_{\alpha=0}^{k} \left(p_{a,(\alpha-1)}(t)+\dot p_{a,(\alpha)}(t)\right)\del x^{a,(\alpha)}(t),
$$
where in the last formula we take $p_{a,(k)}(t)=p_{a,(-1)}(t)=0$. The left-hand side of  \eqref{eqn:action_infinitesimal} equals

\noindent $\sum_a\sum_{\alpha=0}^k \frac{\pa L}{\pa x^{a,(\alpha)}}(\jet k\gamma(t))\del x^{a,(\alpha)}$, so from \eqref{eqn:force_local} we get
\begin{align*}
\sum_a\sum_{\alpha=0}^k\del x^{a,(\alpha)}(t)\left(p_{a,(\alpha-1)}(t)+\dot p_{a,(\alpha)}(t) \right)= \\
\sum_a\sum_{\alpha=0}^k \frac{\pa L}{\pa x^{a,(\alpha)}}(\jet k\gamma(t))\del x^{a,(\alpha)}(t)- \sum_a F_{L,\gamma}(t)_a \del x^{a,(0)}(t).
\end{align*}
Since at a fixed time $t$ the variation $\del x^{a,(\alpha)}(t)$ can be arbitrary, above equation splits into the following set of linear equations
\begin{align*}
&p_{a,(k-1)}(t)=\frac{\pa L}{\pa x^{a,(k)}}(\jet k\gamma(t)),\\
&\dot p_{a,(k-1)}(t)+p_{a,(k-2)}(t)=\frac{\pa L}{\pa x^{a,(k-1)}}(\jet k\gamma(t)),\\
&\hdots\\
&\dot p_{a,(1)}(t)+p_{a,(0)}(t)=\frac{\pa L}{\pa x^{a,(1)}}(\jet k\gamma(t)),\\
&\dot p_{a,(0)}(t)=\frac{\pa L}{\pa x^{a,(0)}}(\jet k\gamma(t))-F_{L,\gamma}(t)_a,
\end{align*}
whose unique solution is
\begin{equation}\label{eqn:momentum_local}
p_{a,(\alpha)}(t)=\sum_{\beta=0}^{k-1-\alpha} (-1)^\beta\frac{\dd^\beta}{\dd t^\beta }\left(\frac{\pa L}{\pa x^{a,(\alpha+\beta+1)}}(\jet k\gamma(t))\right),\quad\text{for $\alpha=0,\hdots,k-1$;}
\end{equation}
i.e., the well-known formula for momenta.


\newpage
\section{Examples and perspectives}\label{sec:examples}

\paragraph{Tulczyjew's approach to higher-order geometric mechanics.}
The problem of geometric formulation of higher-order variational calculus on a manifold $M$ has a few solutions. The first approach is due to Tulczyjew \cite{Tulcz_diff_lagr_1975,Tulcz_lagr_diff_1976}, who gave a geometric construction of a 0-order derivation $\mathcal{E}:\Sec(\TT\ast \TT\infty M)\ra\Sec(\TT\ast\TT\infty M)$ such that $\mathcal{E}(\dd L)=0$ are the higher-order Euler-Lagrange equations for any Lagrangian function $L:\TT k M\ra \R$. Tulczyjew expressed his construction using the language of derivations and  infinite jets. The latter allowed him to cover all orders $k$ by a unique operator. Later Tulczyjew's theory was interestingly extended by Crampin, Sarlet and Cantrijn \cite{Crampin_Sar_Cart_high_ord_diff_eqns_1986}.

Another approach was inspired by Tulczyjew's papers \cite{Tulcz_dyn_ham_1976,Tulcz_dyn_lagr_1976} on the first order mechanics. The idea was to generate the EL equations from a Lagrangian submanifold. Two similar solutions where given by  Crampin \cite{Crampin_EL_higher_order_1990} and de Leon and Lacomba \cite{Leon_Lacomb_lagr_sbmfd_ho_mech_sys_1989}. They constructed the equations from a Lagrangian  submanifold in $\T\TT{k-1}\TT\ast M$ or $\T\TT\ast\TT{k-1} M$ generated by the Lagrangian $L$ .

All these solutions have, however, some drawbacks. First of all, they describe only a part of the Lagrangian formalism, namely the EL equations, whereas the full structure of variational calculus should contain also momenta (boundary terms). Secondly, the correctness of these constructions is checked in coordinates. Note, however, that it is not the coordinate expression that defines the EL equations, but the opposite: we deduce the right local expression from the proper variational principle. Therefore a fully satisfactory geometric construction should somehow explain the steps performed while deriving the known form of the equations (as it is in our approach described in the previous Section \ref{sec:EL}), not give a black-box answer.

In later years Tulczyjew \cite{Tulcz_ehres_jet_theory_2006} extended his work to give a full description of higher-order Lagrangian formalism (i.e., including momenta) in the language of derivations. Another approach was communicated to us by Grabowska \cite{KG_private}, who derived the $\thh{k}$ order formalism as the first order formalism on $\TT{k-1}M$. Her ideas are, to some extend, similar to these of \cite{Crampin_EL_higher_order_1990, Leon_Lacomb_lagr_sbmfd_ho_mech_sys_1989}, where the canonical inclusion $\TT kM\subset \T\TT{k-1} M$ was also used. Another quite general approach to the topic was presented by A.M. Vinogradov and his collaborators in the framework of secondary calculus (see Section 3 of \cite{Viatgliano_lagr_ham_h_field_theor_2010} and the references therein).

Below we relate Tulczyjew's approach to our results from the previous Section \ref{sec:EL}.

\paragraph{Comparison with Tulczyjew's formulas.}
In \cite{Tulcz_lagr_diff_1976,Tulcz_ehres_jet_theory_2006} Tulczyjew introduced graded derivations of degree 0
\begin{align*}
&\mathcal{E}:=\sum_{n=0}^k\frac{(-1)^n}{n!}\left(\tauM{2k}{k+n}\right)^\ast(\dd_\T)^n\iota_{F_n}:\OmegaALL\left(\TT kM\right)\lra\OmegaALL\left(\TT {2k}M\right)
\intertext{and}
&\mathcal{P}:=\sum_{n=1}^k\frac{(-1)^n}{n!}\left(\tauM{2k-1}{k+n-1}\right)^\ast(\dd_\T)^{n-1}\iota_{F_n}:\OmegaALL\left(\TT kM\right)\lra\OmegaALL\left(\TT {2k-1}M\right)
\end{align*}
defined by means of two basic derivations, namely:
\begin{enumerate}[(i)]
\item the total derivative $\dd_\T:\OmegaALL\left(\TT sM\right)\ra\OmegaALL\left(\TT{s+1}M\right)$ being a $d^\ast$--derivation characterized by $\dd_\T f^{(\alpha)} = f^{(\alpha+1)}$ for $0\leq \alpha\leq s$, where $f^{(\alpha)}$ denotes the $(\alpha)$--lift of a smooth function $f$ on $M$ as defined in \eqref{eqn:alpha_lift}, and
\item the $i^\ast$--derivations $\iota_{F_n}:\OmegaALL(\TT kM)\ra\OmegaALL(\TT kM)$, associated with the canonical (nilpotent) endomorphism $F_n: \T \TT kM\ra \T \TT kM$ of the tangent bundle,
    $$F_n\left(\jet 1_{s=0} \jet k_{t=0}\gamma(s,t)\right):=\jet 1_{s=0}\left(\jet k_{t=0}\gamma(st^n,t)\right).$$
\end{enumerate}
Recall that a derivation $a$ of the algebra of differential forms $\OmegaALL(M)$ is called a $d^\ast$-derivation (resp. $i^\ast$-derivation) if $a$ commutes with de Rham differential (resp. if $a$ vanishes on $\Omega^0(M)=\mathcal{C}^\infty(M)$) (\cite{Tulcz_lagr_diff_1976}, see also  \cite{Kolar_Michor_Slovak_nat_oper_diff_geom_1993}, chapter $8$).
As the algebra $\OmegaALL(M)$ is generated by $\Omega^0(M)$ and $\Omega^1(M)$, hence  a $d^\ast$-derivation (resp. $i^\ast$-derivation) is fully determined by its values on $\Omega^0(M)$ (resp. $\Omega^1(M)$).  The values of $d_T$ on $\Omega^0(\TT k M)$ are given in the above characterization of $d_T$, while for $\iota_{F_n}$ one defines $\<\iota_{F_n}\mu, v> := \<\mu, F_n v>$
    for a $1$-form $\mu$ and a tangent vector $v$ on $\TT k M$. Note that $F_n = F_1^n$ and $F_1$ is the \emph{canonical higher almost tangent structure} on $\TT kM$ \cite{Leon_Rodrigues_higher_almost_tangent}.

It turns out that the form $\mathcal{E}(\dd L)\in\Omega^1(\TT{2k}M)$ is vertical with respect to the projection $\tau^{2k}:\TT{2k}M\ra M$ and that  the form $\mathcal{P}(\dd L)\in\Omega^1(\TT{2k-1}M)$ is vertical with respect to $\tauM{2k-1}{k-1}:\TT{2k-1}M\ra\TT{k-1} M$. Therefore taking the appropriate vertical parts of these forms we can define operators
$$\mathcal{EL}:\TT{2k}M\lra\TT\ast M\quad\text{and}\quad\mathcal{PL}:\TT{2k-1}M\lra\TT\ast\TT{k-1}M.$$
Formulas $\mathcal{EL}(\jet{2k}\gamma)=\Upsilon_{k,\tau^\ast}\left(\jet k\Lambda_L(\jet k\gamma)\right)$
and
$\mathcal{PL}(\jet{2k-1}\gamma)=\veps_{k-1}\left(\momenta_{k-1,\tau^\ast}\left(\jet{k-1}\lambda_L(\jet k\gamma)\right)\right)$
relate Tulczyjew's constructions to ours.

\paragraph{Applications to mechanics on algebroids.}
In \cite{MJ_MR_higher_algebroids_2013} we showed that with every almost-Lie algebroid structure on the bundle $\sigma:E\ra M$, one can canonically
associate an infinite tower of graded bundles
\begin{equation}\label{eqn:tower_projection}
\hdots \lra E^k\lra E^{k-1}\lra\hdots\lra E^1=E
\end{equation}
 equipped with a family of graded-bundle relations
$$\kappa_k:\TT kE\relto\T E^k.$$
The relation $\kappa_k$ is of special kind -- it is dual of a vector bundle morphism $\veps_k:\T^\ast E^k \ra \TT k E^\ast$.
A natural example of such a structure is provided by the higher tangent bundles $E^k=\TT kM$ together with the canonical flips $\kappa_k:\TT k\T M\ra\T\TT kM$. Another example is $E^k:=\TT k_e G$, the higher tangent space at identity $e\in G$ to a Lie group $G$. Both examples should be considered as the extreme cases of what should we call a \emph{higher algebroid}. Except for $k=1$ there is no Lie bracket on sections of $E^k$. It is the relation $\kappa_k$ which is responsible for the algebraic structure on $E^k$.
More general examples can be obtained by reducing higher tangent bundles of Lie groupoids.

Given a smooth function $L:E^k\ra\R$ one can naturally define a variational problem on $E^k$. Such problems cover, on one hand, the standard variational problems like Problem \ref{prob:var_M} (in which case $E^k=\TT kM$), and, on the other hand, the reduction of  invariant higher-order variational problems on a Lie groupoid. We showed in \cite{MJ_MR_higher_algebroids_2013} that for such problems an analog of Theorem \ref{thm:var_calc} holds, as well. Thus, we can characterize the variation of an action by means of the \emph{force} $\Upsilon_{k,\sigma^\ast}\left(\jet k\veps_k \left(\dd L(\jet k\gamma(t))\right)\right)$ and \emph{momentum} $\momenta_{k-1,\sigma^\ast}\left(\jet{k-1}\tau^k_{k-1}\left(\veps_k \left(\dd L(\jet k\gamma(t))\right)\right)\right)$, where now $\veps_k:\TT\ast E^k\ra\TT k E^\ast$ is the dual of the relation $\kappa_k$, $\tau^k_{k-1}:E^k\ra E^{k-1}$ is the tower projection \eqref{eqn:tower_projection}, $\sigma^\ast:E^\ast\ra M$ is the dual of $\sigma$, while $\Upsilon_{k,\sigma^\ast}$ and $\momenta_{k,\sigma^\ast}$ are the same maps introduced in Section \ref{sec:lemma}.


\paragraph{Example: Riemannian cubic polynomials.}
Let us consider one of the simplest, but interesting,  second-order variational problem: given an integer $n\geq 2$ and
points $a=x_1<x_2<\ldots<x_n=b$ on the real line $\R$, and values $y_1, y_2, \ldots, y_n, v_a, v_b\in\R$,
find an $f\in C^2([a,b])$ such that $f(x_i)=y_i$ for $1\leq i\leq n$ and $f'(a)=v_a$, $f'(b)=v_b$ which
minimizes the integral $\int_a^b f''(x) dx$.  This problem has a unique solution, called a \emph{complete cubic spline} \cite{Boor_splines_1978}, which is a piece-wise cubic polynomial $P$ determined uniquely
by the following properties: it is a polynomial of degree $\leq 3$ on each interval $[x_i, x_{i+1}]$, $1\leq i\leq n-1$,
$P'(a) =v_a$, $P'(b) =v_b$, and it has continuous second derivatives at each ``slope'' $x_2, \ldots, x_{n-1}$.
Note that the EL equations \eqref{eqn:force_local} read as $f^{(4)} = 0$, i.e., $f$ is locally a polynomial of degree $\leq 3$.

Above example  generalizes to a variational problem on any Riemannian manifold $(M, g)$.
Indeed, let $\nabla$ denote the Levi-Civita connection for the metric $g$ and, for a smooth curve $\gamma: \R \to M$
denote by $D_t$ the covariant derivative $\nabla_{\dot{\gamma}(t)}$ along  $\gamma$.
Following \cite{NHP_splines_surfaces_1989} we define
\begin{equation}\label{e:Riemanniancubic}
L(\jet 2_{t=0}\gamma(t)) := g_{\gamma(0)}(D_t|_{t=0}\dot{\gamma}(t),  D_t|_{t=0}\dot{\gamma}(t)).
\end{equation}
Locally, for $\gamma(t)\sim(x^a(t))$,
$$
D_t|_{t=0}\dot{\gamma}(t) = (\ddot{x}^c + \Gamma_{ab}^c(x) \dot{x}^a\dot{x}^b)\partial_{x^c},
$$
where $\Gamma_{ab}^c(x)$ are the Christoffel symbols of the metric $g$ and $\dot{x}^a$ (resp. $\ddot{x}^a$) are
(resp. second-order) derivatives of $x^a(t)$ at $t=0$.
Therefore \eqref{e:Riemanniancubic} is indeed a function on $\T^2M$.

We shall now compute the second-order EL equations associated with the Lagrangian $L$ given by \eqref{e:Riemanniancubic}. To this end, recall two fundamental properties of the Levi-Civita connection:
\begin{align}
&\nabla_XY-\nabla_YX=[X,Y],\label{eqn:LC1}\\
&Xg(Y,Z)=g(\nabla_XY,Z)+g(Y,\nabla_XZ),\label{eqn:LC2}
\end{align}
which hold for any vector fields $X,Y,Z$ on the manifold $M$..

Note that although the Lie bracket $[X,Y]$ is defined for vector fields, to calculate its value at a point $p\in M$ it is enough to know vectors $X(Y)(p):=\T Y\circ X(p)\in\T_{Y(p)}\T M$ and $Y(X)(p)\in \T_{X(p)}\T M$ (see, e.g., \cite{Kolar_Michor_Slovak_nat_oper_diff_geom_1993}). According, we introduce the following notion: for a vector $X\in\T_pM$ and a vector $A\in\T_{X}\T M$ lying over $Y:=\T\tau_M(A)$ by an \emph{$A$--extension of $X$ around $p$} we will understand any (local) vector field $\wt X$ on $M$ such that $\wt X(p)=X$ and $Y(\wt X)(p)=A$. This means that $A$ is tangent to the graph of $\wt X$ at $X$.

Consider now a curve $\gamma(t)\in M$ and any generator $\del \gamma(t)\in\T_{\gamma(t)}M$  of an admissible variation $\del\jet 2\gamma(t)=\kappa_{2}(\jet 2\del\gamma(t))$. Let $\wt{\dot\gamma}$ be any $\del(\jet 1\gamma)(t)$--, i.e., $\kappa(\jet 1\del\gamma(t))$--extension, in the aforementioned sense,  of $\jet 1\gamma(t)=\dot \gamma(t)$ and let $\wt{\del\gamma}$ be any $\jet 1\del\gamma(t)$--extension of $\del \gamma(t)$ along $\gamma(t)$ (in particular $\wt{\del\gamma}=\del\gamma$ along $\gamma(t)$). It follows immediately from \cite{Kolar_Michor_Slovak_nat_oper_diff_geom_1993} (or \cite{MJ_MR_higher_algebroids_2013}, Proposition 2.2) that
\begin{equation}\label{eqn:bracket}
[\wt {\del\gamma},\wt{\dot\gamma}]=0\quad \text{along $\gamma(t)$}.
\end{equation}

We shall now compute the differential of the action $S_L$ in the direction of $\del\jet 2\gamma(t)$. Note that $g(\nabla_{\wt{\dot\gamma}}\wt{\dot\gamma},\nabla_{\wt{\dot\gamma}}\wt{\dot\gamma})$ is a function on $M$ coinciding with $L(\jet 2\gamma)=g(\nabla_{\dot\gamma}\dot\gamma,\nabla_{\dot\gamma}\dot\gamma)$ along $\gamma(t)$, hence
\begin{align*}
\<\dd S_L(\jet 2\gamma),\del\jet 2\gamma>=&\int_{t_0}^{t_1}\del\gamma g(\nabla_{\wt{\dot\gamma}}{\wt{\dot\gamma}},\nabla_{\wt{\dot\gamma}}{\wt{\dot\gamma}})\dd t\overset{\eqref{eqn:LC2}}=2\int_{t_0}^{t_1} g(\nabla_{\del\gamma}\nabla_{\wt{\dot\gamma}}{\wt{\dot\gamma}},\nabla_{\dot{\gamma}}{\dot{\gamma}})\dd t=\\
&2\int_{t_0}^{t_1} g\left(R({\del\gamma},{\dot{\gamma}}){\dot{\gamma}},\nabla_{\dot{\gamma}}{\dot{\gamma}}\right)+
g\left(\nabla_{\dot{\gamma}}\nabla_{\wt{\del\gamma}}{\wt{\dot\gamma}}+\nabla_{[\wt{\del\gamma},{\wt{\dot\gamma}}]}{\wt{\dot\gamma}},\nabla_{\dot{\gamma}}{\dot{\gamma}}\right)\dd t,
\intertext{where $R(X,Y)Z=\nabla_X\nabla_YZ-\nabla_Y\nabla_XZ-\nabla_{[X,Y]}Z$ is the Riemann tensor of $g$. By \eqref{eqn:bracket} and by the standard symmetry property $g(R(X,Y)Z,T)=g(R(T,Z)Y,X)$, the later equals}
&2\int_{t_0}^{t_1} g\left(R(\nabla_{\dot{\gamma}}{\dot{\gamma}},{\dot{\gamma}}){\dot{\gamma}},{\del\gamma}\right)+
g\left(\nabla_{\dot{\gamma}}\nabla_{\wt{\del\gamma}}{\wt{\dot\gamma}},\nabla_{\dot{\gamma}}{\dot{\gamma}}\right)\dd t.
\end{align*}
Let us transform the last integrand as follows
\begin{align*}
g&\left(\nabla_{\dot{\gamma}}\nabla_{\wt{\del\gamma}}{\wt{\dot\gamma}},\nabla_{\dot{\gamma}}{\dot{\gamma}}\right)
\overset{\eqref{eqn:LC1}\text{, }\eqref{eqn:bracket}}= g\left(\nabla_{\dot{\gamma}}\nabla_{\dot{\gamma}}\wt{\del\gamma},\nabla_{\dot{\gamma}}{\dot{\gamma}}\right)\overset{\eqref{eqn:LC2}}=\\
&\dot\gamma g\left(\nabla_{\dot\gamma}\del\gamma,\nabla_{\dot\gamma}\dot\gamma\right)-g\left(\nabla_{\dot\gamma}\del\gamma,\nabla_{\dot\gamma}\nabla_{\dot\gamma}\dot\gamma\right)
\overset{\eqref{eqn:LC2}}=
\dot\gamma g\left(\nabla_{\dot\gamma}\del\gamma,\nabla_{\dot\gamma}\dot\gamma\right)-\dot\gamma g\left(\del\gamma,\nabla_{\dot\gamma}\nabla_{\dot\gamma}\dot\gamma\right)+g\left(\del\gamma,\nabla_{\dot\gamma}\nabla_{\dot\gamma}\nabla_{\dot\gamma}\dot\gamma\right).
\end{align*}
This gives us
\begin{equation*}\begin{split}\<\dd S_L(\jet 2\gamma),\del\jet 2\gamma>=
&2\int_{t_0}^{t_1} g\left(R(\nabla_{\dot{\gamma}}{\dot{\gamma}},\dot\gamma)\dot\gamma+\nabla_{\dot\gamma}\nabla_{\dot\gamma}\nabla_{\dot\gamma}\dot\gamma,{\del\gamma}\right)\dd t+\\
&2\left[g\left(\nabla_{\dot\gamma}\del\gamma,\nabla_{\dot\gamma}\dot\gamma\right)-
g\left(\del\gamma,\nabla_{\dot\gamma}\nabla_{\dot\gamma}\dot\gamma\right)\right]\Bigg|_{t_0}^{t_1}.
\end{split}\end{equation*}
We see that the boundary term depends only on $\jet 1\del\gamma$, hence comparing the last expression with \eqref{eqn:action_var_full} we get that $\Upsilon_{2,\tau^\ast}\left(\jet 2\Lambda_L(\jet 2\gamma)\right)=g\left(R(\nabla_{\dot\gamma}\dot\gamma,\dot\gamma)\dot\gamma+\nabla_{\dot\gamma}\nabla_{\dot\gamma}\nabla_{\dot\gamma}\dot\gamma,\cdot\right)$. Consequently,  the EL equations for $L$ read
\begin{equation}\label{e:ELforRcp}
D_t^3 \dot{\gamma}(t) + R(D_t \dot{\gamma}(t), \dot{\gamma}(t)) \dot{\gamma}(t) =0,
\end{equation}
in agreement with \cite{NHP_splines_surfaces_1989}.
Solutions of \eqref{e:ELforRcp} are called \emph{Riemannian cubic polynomials}.


\appendix
\newpage
\section{Appendix}\label{app:geom_lemma}

\begin{proof}[Proof of Lemma~\ref{lem:Upsilon_canonical}]
Let us explain that the functoriality of a vector bundle morphism $\Thol{k} E\ra E$  means that actually we have a family $F=\{F_E\}$ of vector bundle morphisms
 as in \eqref{e:FE}, parameterized by vector bundles $\sigma:E\to M$, such that
 for any morphism
 $f: E_1\to E_2$ between vector bundles $\sigma_i: E_i\to M_i$, $i=1,2$, we have
 \begin{equation}\label{e:functor}
 F_{E_2}\circ \Thol{k} f =f\circ F_{E_1}.
\end{equation}

We shall derive the local coordinate form of $F_E$.
Observe first that the base map of $F_E$, denoted by $\und{F_E}: \TT{2k} M\to M$, has to be functorial as well. In other words $\und{F_E}$ is a natural transformation between Weil functors $\TT{2k}$ and $\operatorname{Id}$. It follows from \cite{Kolar_weil_bund_gen_jet_spaces_2008} that such a transformation corresponds to a unique homomorphism between the corresponding Weil algebras, namely
$h:\Weil^{2k}=\R[\nu]/\<\nu^{2k+1}>\ra\R$ given by $1\mapsto 1$ and $\nu \mapsto 0$. We conclude that $\und{F_E} = \tau^{2k}$ is the canonical bundle projection.

Every vector bundle can be locally described as $E=M\times V$, where $M$ is the base and $V$ is the model fiber. In this case  $\Thol{k}E = \TT{2k} M \times\TT k\TT kV$ and hence $F_E$ must be of the form
$F_E(v^{2k},X)=(\tau^{2k}(v^{2k}), L_{v^{2k}}(X))$, where $L_{v^{2k}}:\TT k\TT kV\ra V$ is a linear map, $v^{2k}\in \TT{2k} M$ and $X\in \TT k\TT kV$. Consider now a morphisms of the form $f=f_0\times\id_V: M\times V\to M\times V$,
where $f_0:M\to M$ is a smooth function. It follows from \eqref{e:functor} that $L_{v^{2k}}$ does not depend on $v^{2k}$ and hence, locally, $F_E$ is of the form
$$F_E(v^{2k},X)=(\tau^{2k}(v^{2k}), L(X)),$$
where $L: \TT k\TT k V\to V$ is a linear map.

Now we shall find the coordinate form of $L$. Since every vector space $V$ is a vector bundle over a one-point base, it follows that $F$ induces a functorial morphism
 $F_V: \Thol k V = \TT k\TT kV \to V$. But  $\TT k \TT kV \simeq  \Weil^{k, k}\otimes V$, canonically, where $\Weil^{k,k}$ denotes  the Weil algebra $\Weil^{k, k}=\R[\nu,\nu']/\<\nu^{k+1}, {\nu'}^{k+1}>$. Any
 functorial linear map $\Weil^{k,k}\otimes V \to V$ is of the form $f\otimes \id_{V}$ for some fixed linear map $f:\Weil^{k, k}\to \R$. If we denote $c_{\alpha\beta}:=f(\nu^\alpha{\nu'}^\beta)$, then we conclude that  general local form of $F_E:\Thol{k}E \to E$
is
$$
F_E\left(x^{a,(r)}, y^{i,(\alpha,\beta)}\right) = \left(\und{x^a}=x^{a,(0)}, \und{y^i}= \sum_{0\leq \alpha,\beta\leq k } c_{\alpha\beta} y^{i,(\alpha,\beta)}\right),
$$
where $(x^{a,(r)}, y^{i,(\alpha, \beta)})$ are the adapted coordinates on $\Thol{k}E$ (as defined in Preliminaries)
 induced from the standard coordinates $(x^a, y^i)$ on $E$ and we have underlined the coordinates in the co-domain.

To get more information on the coefficients $c_{\alpha\beta}$ it will be enough to consider the case $E= M\times \R$ with $V=\R$.
Consider the vector bundle morphism by $\wt{\phi}: M\times\R\to M\times\R$ given by
 $\wt{\phi}(x, y) = (x, \phi(x) \cdot y)$, where  $\phi \in C^\infty(M)$. We are looking for possible $F_E$ such that
 the following diagram
\begin{equation}\label{e:FE2}
\xymatrix{
\Thol{k} E \ar[d]_{\Thol{k} \wt{\phi}} \ar[rr]^{F_E}  && E \ar[d]^{\wt{\phi}} \\
\Thol{k} E  \ar[rr]^{F_E}  && E
}
\end{equation}
commutes. In our case, $\Thol{k} E = \TT{2k}\R\times \TT k\TT k\R$ and the commutativity of \eqref{e:FE2} reads as
\begin{equation}\label{e:ckl}
\sum_{0\leq \alpha,\beta\leq k} c_{\alpha\beta} \und{y}^{(\alpha,\beta)} = \phi(x) \cdot \sum_{0\leq \alpha,\beta\leq k} c_{\alpha\beta} y^{(\alpha,\beta)},
\end{equation}
where $\left(x^{(r)}, y^{(\alpha,\beta)}\right) \mapsto \left(x^{(r)}, \und{y}^{(\alpha,\beta)}\right)$ is the coordinate expression of the
morphism $\Thol{k}\wt{\phi}$. To obtain the coordinate expression of
$\TT k \TT k \wt{\phi}: \TT k\TT k E \to \TT k\TT kE$, for $E=M\times\R$ and  $M=\R$, which sends the class $[\gamma]$
of a map $\gamma:\R\times\R\to E$ to the class $[\wt\phi\circ \gamma]$ in  $\TT k\TT k E$, we write
\begin{align}\label{e:xykl}
x(\wt{\phi}(\gamma(s, t)))  &= x(\gamma(s, t)) =  \sum_{0\leq \alpha,\beta\leq k} x^{(\alpha,\beta)}([\gamma]) \frac{s^\alpha t^\beta}{\alpha! \beta!} + o(s^k, t^k),\\ \notag
y(\wt{\phi}(\gamma(s, t))) &= \phi(x(\gamma(s,t))) y(\gamma(s,t)) =\\\label{eqn:y}
&\left(\phi(x) +  \sum_{r=1}^r \frac{\phi^{(r)}(x)}{r!}h^r + o(h^k)\right)
\left(\sum_{0\leq \alpha, \beta\leq k} y^{(\alpha,m)}([\gamma])\frac{s^\alpha t^\beta}{\alpha!\beta!} + o(s^k, t^k)\right),
\end{align}
where $x=x(\gamma(0,0))=x^{(0,0)}([\gamma])$ and
$h= x(\gamma(s,t)) - x(\gamma(0,0))$ as in \eqref{e:xykl}. Clearly,
$\und{y}^{(\alpha,\beta)}$ is the coefficient of $s^\alpha t^\beta/\alpha!\beta!$ in $y(\wt{\phi}(\gamma(s, t)))$. For example, in case $k=2$
we find that
\begin{equation}\label{eqn:y21_underlined}
\und{y}^{(2,1)}  = y^{(2,1)}\phi(x) + 2 y^{(1,1)} x^{(1,0)}\phi'(x) + y^{(2,0)}x^{(0,1)}\phi'(x) + \ldots
\end{equation}
For any $l,m\geq 0$, $\TT l\TT m E$ is a three-fold graded bundle with bases
$E$, $\TT lE$ and $\TT mE$ \cite{JG_MR_gr_bund_hgm_str_2011}. Its algebra of multi-homogeneous functions, which is a subalgebra
of all smooth functions on $\TT l\TT m E$, is $\Z^3$-graded. For example, $y^{(\alpha,\beta)}$ is of degree $1$
with respect to the vector bundle structure over $\TT l\TT m M$ and of degrees $\alpha$ and $\beta$ with respect to
the bundle structures over $\TT m E$ and
$\TT lE$, respectively. Of course, $\TT k\TT k\wt{\phi}$ preserves this grading.
Restricting $\TT k\TT k\wt{\phi}$ to $\Thol kE$ means just replacing $x^{(\alpha,\beta)}$ with $x^{(\alpha+\beta)}$.
Thus we get
$$
\und{y}^{(\alpha,\beta)} = y^{(\alpha,\beta)} \phi(x) + \alpha y^{(\alpha-1,\beta)} x^{(1)}\phi'(x) + \beta y^{(\alpha, \beta-1)} x^{(1)} \phi'(x) + \ldots
$$
where, in case $\alpha=0$ or $\beta=0$, it is enough to put  $0$ instead of $y^{(-1,\beta)}$ or $y^{(\alpha, -1)}$.
By comparing the  $\phi'(x)$ coefficients in the above expression and separating the terms with respect to the gradation we find that
the necessary condition for \eqref{e:ckl} is
$$
\sum_{0\leq \alpha,\beta\leq k, \alpha+\beta = s} c_{\alpha\beta} (\alpha y^{(\alpha-1, \beta)} + \beta y^{(\alpha, \beta-1)}) = 0,
$$
for any $0\leq s\leq 2k$, with the convention that $y^{(\alpha, \beta)}=0$ whenever $\alpha$ or $\beta$ is negative or greater than $k$.
 This gives a recurrence relation between the coefficients $c_{\alpha\beta}$ when the sum $\alpha+\beta=s$ fixed. Namely,  $(\alpha+1) c_{(\alpha+1)(\beta-1)}+ \beta c_{\alpha\beta} = 0$ if $0\leq \alpha\leq k-1$ and $1\leq \beta\leq k$. Moreover, $c_{\alpha k}=0$ for $1\leq \alpha\leq k$. It follows that
for $0\leq s\leq k$ the vector $(c_{s0}, c_{1(s-1)}, \ldots, c_{0s})$ is proportional
to $(\binom{s}{0}, -\binom{s}{1}, \binom{s}{2}, \ldots, \pm \binom{s}{s})$
and $c_{\alpha\beta} =0$ whenever $\alpha+\beta>k$. Hence,
$$
\sum_{\alpha,\beta} c_{\alpha\beta} y^{(\alpha,\beta)} = \sum_{s=0}^k a_s \sum_{\alpha+\beta=s}(-1)^\alpha \binom{s}{\alpha} y^{(\alpha,\beta)}.
$$
for some coefficients $a_0, \ldots, a_k\in \R$. Therefore,  $F_E$ must have the desired form being a linear combination of morphisms $\Upsilon_{s, \sigma}$ given locally by ~\eqref{eqn:Upsilon_local}.
\end{proof}

A slight modification of the proof above shows that $\Upsilon_{k,\sigma}$ has no canonical extension to $\TT k\TT kE$.

\begin{cor} Any functorial vector bundle morphism
\begin{equation}\label{e:FE1}
\xymatrix{
\tgT^k\tgT^k E \ar[d]\ar[r]^{F_E}  & E \ar[d] \\
\tgT^k\tgT^k M \ar[r]^{\und{F_E}}  & M
}
\end{equation}
is proportional to $\tau_E^{(k,k)}:\tgT^k\tgT^k E\ra E$  covering $\tau^{(k,k)}:\tgT^k\tgT^k M\ra M$.
In particular, $\Upsilon_{k, \sigma}$ has, in general, no canonical extension to a functorial vector bundle morphism on $\tgT^k\tgT^k E$.
\end{cor}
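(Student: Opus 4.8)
The plan is to follow, with one structural modification, the argument just given for Lemma~\ref{lem:Upsilon_canonical}: the point is that over the \emph{full} bundle $\tgT^k\tgT^k E$ one never collapses the base coordinates $x^{a,(\alpha,\beta)}$ to $x^{a,(\alpha+\beta)}$, as one does after restricting to semi-holonomic vectors. First, exactly as before, I would observe that the base map $\und{F_E}:\tgT^k\tgT^k M\to M$ is a natural transformation of the Weil functor $\tgT^k\tgT^k$, whose Weil algebra is $\Weil^{k,k}$, into the identity functor; by \cite{Kolar_weil_bund_gen_jet_spaces_2008} it corresponds to the unique $\R$-algebra homomorphism $\Weil^{k,k}\to\R$ annihilating both generators, so $\und{F_E}=\tau^{(k,k)}$. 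Writing $E$ locally as $M\times V$ and testing the functoriality relation \eqref{e:functor} against morphisms of the form $f_0\times\id_V$ then shows, just as in the proof of Lemma~\ref{lem:Upsilon_canonical}, that $F_E$ has the shape $F_E(v,X)=(\tau^{(k,k)}(v),L(X))$ with $L:\tgT^k\tgT^k V\to V$ a fixed linear map; using $\tgT^k\tgT^k V\simeq\Weil^{k,k}\otimes V$ and functoriality in $V$, one gets $L=f\otimes\id_V$ for a linear functional $f:\Weil^{k,k}\to\R$. Setting $c_{\alpha\beta}:=f(\nu^\alpha{\nu'}^\beta)$ for $0\le\alpha,\beta\le k$, the local form of $F_E$ becomes
\[
F_E\left(x^{a,(r)},y^{i,(\alpha,\beta)}\right)=\left(x^{a,(0)},\ \sum_{0\le\alpha,\beta\le k}c_{\alpha\beta}\,y^{i,(\alpha,\beta)}\right).
\]

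The second step is to pin down the constants $c_{\alpha\beta}$, and this is where the argument departs from that of Lemma~\ref{lem:Upsilon_canonical}. I would take $E=\R\times\R$ (with $M=\R$) and impose commutativity of diagram \eqref{e:FE2} for the vector-bundle automorphism $\wt\phi(x,y)=(x,\phi(x)y)$, computing the coordinate form of $\tgT^k\tgT^k\wt\phi$ as in \eqref{eqn:y}. The essential point is that $\tgT^k\tgT^k E$ is a genuine three-fold graded bundle \cite{JG_MR_gr_bund_hgm_str_2011} in which $x^{(1,0)}$ and $x^{(0,1)}$ are \emph{two distinct} coordinates, of bidegrees $(1,0)$ and $(0,1)$; consequently
\[
\und{y}^{(\alpha,\beta)}=y^{(\alpha,\beta)}\phi(x)+\alpha\,y^{(\alpha-1,\beta)}x^{(1,0)}\phi'(x)+\beta\,y^{(\alpha,\beta-1)}x^{(0,1)}\phi'(x)+\ldots
\]
(with the convention $y^{(\alpha,\beta)}=0$ whenever an index is negative or exceeds $k$, the omitted terms carrying higher derivatives of $\phi$). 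Substituting into the commutation relation $\sum_{\alpha,\beta}c_{\alpha\beta}\und{y}^{(\alpha,\beta)}=\phi(x)\sum_{\alpha,\beta}c_{\alpha\beta}y^{(\alpha,\beta)}$, comparing the $\phi'(x)$-terms, and separating according to the multigrading, the coefficient of $x^{(1,0)}\phi'(x)\,y^{(\mu,\nu)}$ forces $(\mu+1)c_{(\mu+1)\nu}=0$ while that of $x^{(0,1)}\phi'(x)\,y^{(\mu,\nu)}$ forces $(\nu+1)c_{\mu(\nu+1)}=0$, for all admissible $\mu,\nu$. Hence $c_{\alpha\beta}=0$ unless $\alpha=\beta=0$, so $F_E=c_{00}\,\tau_E^{(k,k)}$, which is the first assertion.

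The last assertion is then immediate. A functorial extension of $\Upsilon_{k,\sigma}$ to $\tgT^k\tgT^k E$ would, by the first part, be a scalar multiple of $\tau_E^{(k,k)}$, hence would restrict on $\Thol{k}E$ to the map sending $\left(x^{a,(r)},y^{i,(\alpha,\beta)}\right)$ to $\left(x^a,\,c\,y^{i,(0,0)}\right)$; but for $k\ge1$ formula \eqref{eqn:Upsilon_local} shows that $\Upsilon_{k,\sigma}$ sends it to $\left(x^a,\sum_{\alpha=0}^k(-1)^\alpha\binom k\alpha y^{i,(\alpha,k-\alpha)}\right)$, which genuinely involves the degree-$k$ coordinates $y^{i,(\alpha,k-\alpha)}$ and not $y^{i,(0,0)}$. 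So no such extension exists. I expect the only genuinely delicate step --- exactly as in Lemma~\ref{lem:Upsilon_canonical} --- to be the bookkeeping of the three-fold grading on $\tgT^k\tgT^k E$, in particular making rigorous that in the non-semi-holonomic setting the two first-order base coordinates $x^{(1,0)}$ and $x^{(0,1)}$ are never identified; this non-identification is exactly the mechanism that now kills every coefficient $c_{\alpha\beta}$ except $c_{00}$, whereas in the semi-holonomic case their identification was what left room for the whole family $\Upsilon_{0,\sigma},\ldots,\Upsilon_{k,\sigma}$.
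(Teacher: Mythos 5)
Your proposal is correct and follows essentially the same route as the paper: it reruns the argument of Lemma~\ref{lem:Upsilon_canonical} (Weil-algebra determination of the base map, reduction to a linear functional $f:\Weil^{k,k}\to\R$, and testing against $\wt\phi(x,y)=(x,\phi(x)y)$), with the decisive point that on the full bundle $\tgT^k\tgT^k E$ the coordinates $x^{(1,0)}$ and $x^{(0,1)}$ are not identified, so extracting the $\phi'(x)$-terms kills every $c_{\alpha\beta}$ except $c_{00}$ --- exactly the paper's reasoning. Your explicit comparison of the restriction of $c_{00}\,\tau_E^{(k,k)}$ with formula \eqref{eqn:Upsilon_local} for the final non-extendability claim is also the intended (implicit) argument.
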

\begin{proof}
Just go over the same reasoning of the proof of Lemma \ref{lem:Upsilon_canonical}. By considering a trivial vector bundle $E=M\times V$ and
endomorphisms of the form $f=f_0\times \id_V:E\to E$, where $f_0\in C^\infty(M)$,
we find that a general local form of $F_E:\tgT^k\tgT^k E\ra E$ has to be
$$
F_E(x^{a, (\alpha',\beta')}, y^{i,(\alpha,\beta)}) = (\und{x}^a = x^{a, (0,0)}, \und{y}^i = \sum_{0\leq \alpha,\beta\leq k} c_{\alpha\beta}y^{i, (\alpha,\beta)}).
$$
In the same way we find that the coefficients $c_{\alpha\beta}$ have to satisfy \eqref{e:ckl}, where
$(x^{(\alpha',\beta')}, y^{(\alpha, \beta)})\mapsto (x^{(\alpha',\beta')}, \und{y}^{(\alpha, \beta)})$ is the coordinate expression of the morphism
$\TT k\TT k\wt{\phi}$, where $\wt{\phi}:\R\times\R\ra\R\times\R$, $(x, y)\mapsto (x, \phi(x)\cdot y)$
for some function $\phi\in C^\infty(\R)$. In the transformation expression for $\und{y}^{(\alpha, \beta)}$ we separate terms with respect to the grading
and collect the terms which contain $\phi'(x)$. Then we find easily that \eqref{e:ckl} implies that $c_{\alpha,\beta}=0$ unless $\alpha=\beta=0$.
Therefore $F_E = c_{00} \cdot \tau_E^{(k, k)}$, as  claimed.
\end{proof}

\newpage
\section*{Acknowledgments}
This research was supported by Polish National Science Center grant
under the contract number DEC-2012/06/A/ST1/00256.

The authors are grateful for professors Paweł Urbański and Janusz Grabowski for reference suggestions. We wish to thank especially the second of them for reading the manuscript and giving helpful remarks. We are also grateful for the reviewers for many helpful suggestions.


\end{document}